\newtheoremstyle{TheoremNum}
	{\topsep}{\topsep}{}{0pt}{}{. }{5pt}             
                              {\thmname {\bfseries #1} \thmnote {\bfseries #3}}
\theoremstyle{TheoremNum}
\newtheorem{thmn}{Theorem}
\theoremstyle{definition}
\newtheorem{example}{Example}[section]
\theoremstyle{plain}
\newtheorem{theorem}{Theorem}[section]
\newtheorem{lemma}[theorem]{Lemma}
\newtheorem{observation}[theorem]{Observation}
\theoremstyle{definition}
\theoremstyle{remark}
\newcommand{\vect}[1]{\mathbf{#1}} 
\def\1{\bm{1}}
\DeclareMathAlphabet{\mathsfit}{\encodingdefault}{\sfdefault}{m}{sl}
\SetMathAlphabet{\mathsfit}{bold}{\encodingdefault}{\sfdefault}{bx}{n}
\def\gX{{\mathcal{X}}}
\def\gY{{\mathcal{Y}}}
\def\sR{{\mathbb{R}}}
\newcommand{\R}{\mathbb{R}}
\DeclareMathOperator*{\argmax}{arg\,max}
\DeclareMathOperator*{\argmin}{arg\,min}
\newcommand{\method}{\textsc{Neur2BiLO}}
\newcommand{\mls}{NN$^l$}
\newcommand{\mlu}{NN$^u$}
\newcommand{\mld}{NN$^d$}
\newcommand{\mln}{NN$^n$}
\newcommand{\gbts}{GBT$^l$}
\newcommand{\gbtu}{GBT$^u$}
\newcommand{\solver}{B\&C}
\newcommand{\greedy}{G-VFA}
\newcommand{\drbaseline}{B\&C+}
\newcommand{\mkkt}{MKKT}
\newcommand{\isnn}{ISNN}
\title{\method: Neural Bilevel Optimization}
\author{%
  Justin Dumouchelle\thanks{Corresponding author: \texttt{justin.dumouchelle@mail.utoronto.ca}} \\
  University of Toronto \\
  \And 
  Esther Julien \\
  TU Delft \\
  \And 
  Jannis Kurtz \\
  University of Amsterdam \\
  \And 
  Elias B. Khalil \\
  University of Toronto \\
}
\begin{document}

\maketitle
\begin{abstract}
Bilevel optimization deals with nested problems in which a \textit{leader} takes the first decision to minimize their objective function while accounting for a \textit{follower}'s best-response reaction. Constrained bilevel problems with integer variables are particularly notorious for their hardness.
While exact solvers have been proposed for mixed-integer~\textit{linear} bilevel optimization, they tend to scale poorly with problem size and are hard to generalize to the non-linear case. On the other hand, problem-specific algorithms (exact and heuristic) are limited in scope.
Under a data-driven setting in which similar instances of a bilevel problem are solved routinely, our proposed framework,~\method, embeds a neural network approximation of the leader's or follower's value function, trained via supervised regression, into an easy-to-solve mixed-integer program.
\method{} serves as a heuristic that produces high-quality solutions extremely fast for four applications with linear and non-linear objectives and pure and mixed-integer variables. 
\end{abstract}
\section{Introduction}
\label{sec:introduction}
\paragraph{A motivating application.}
Consider the following \textit{discrete network design problem} (DNDP)~\cite{rey2020computational,rey2023optimization}. A transportation planning authority seeks to minimize the average travel time on a road network represented by a directed graph of nodes $N$ and links $A_1$ by investing in constructing a set of roads (i.e., links) from a set of options $A_2$, subject to a budget $B$. The planner knows the number of vehicles that travel between any origin-destination (O-D) pair of nodes. A good selection of links should take into account the drivers' reactions to this decision. One common assumption is that drivers will optimize their O-D paths such that a~\textit{user equilibrium} is reached. This is known as~\textit{Wardrop's second principle} in the traffic assignment literature, an equilibrium in which ``no driver can unilaterally reduce their travel costs by shifting to another route''~\cite{mathew2006introduction}. This is in contrast to the~\textit{system optimum}, an equilibrium in which a central planner dictates each driver's route, an unrealistic assumption that would not require bilevel modeling. A link cost function is used to model the travel time on an edge as a function of traffic. Let $c_{ij}\in\mathbb{R}_{+}$ be the capacity (vehicles per hour (vph)) of a link and $T_{ij}\in\mathbb{R}_{+}$ the free-flow travel time (i.e., travel time on the link without congestion). The US Bureau of Public Roads uses the following widely accepted formula to model the travel time $t(y_{ij})$ on a link used by $y_{ij}$ vehicles per hour:   
$t(y_{ij})=T_{ij}(1+0.15({y_{ij}}/{c_{ij}})^4).$
As the traffic $y_{ij}$ grows to exceed the capacity $c_{ij}$, a large quartic increase in travel time is incurred~\cite{mathew2006introduction}.

\textit{Bilevel optimization} (BiLO)~\cite{bard2013practical} models the DNDP and many problems in which an agent (the~\textit{leader}) makes decisions that minimize their cost function subject to another agent's (the~\textit{follower}'s) best response. In the DNDP, the~\textit{leader} is the transportation planner and the~\textit{follower} is the population of drivers, giving rise to the following optimization problem
\[
\begin{aligned}
&\min_{\vect x \in\{0,1\}^{|A_2|}, \vect y} && \sum_{(i,j)\in A} y_{ij}t(y_{ij}) \\
&\quad \qquad \text{s.t.}&& \sum_{(i,j)\in A_2} g_{ij} x_{ij} \leq B, \\
&&& \vect y \in
\begin{aligned}[t]
&\argmin_{\vect y^\prime\in\mathbb{R}_+^{|A|}} && \sum_{(i,j)\in A}\int_0^{y^\prime_{ij}} t_{ij}(v) dv \\
& \qquad \text{s.t.} && \vect y^\prime \text{ is a valid network flow},\\
&&& x_{ij} = 0 \implies y^\prime_{ij} = 0,
\end{aligned}%
\end{aligned}%
\]

where $A_2\cap A_1=\emptyset, A=A_1\cup A_2$. The leader minimizes the total travel time across all links subject to a budget constraint and the followers' equilibrium which is expressed as a network flow on the graph augmented by the leader's selected edges that satisfies O-D demands; the integral in the follower's objective models the desired equilibrium and evaluates to 
$T_{ij}y^\prime_{ij}+\frac{0.15T_{ij}}{5c_{ij}^4}\big({y^{\prime}_{ij}}\big)^5$.

Going beyond the DNDP,~\citet{dempe2020bilevel} lists more than 70 applications of BiLO ranging from pricing in electricity markets (leader is an electricity-supplying retailer that sets the price to maximize profit, followers are consumers who react accordingly to satisfy their demands~\cite{zugno2013bilevel}) to interdiction problems in security settings (leader inspects a budgeted subset of nodes on a road network, follower selects a path such that they evade inspection~\cite{washburn1995two}).

\paragraph{Scope of this work.}
We are interested in \textit{mixed-integer non-linear bilevel optimization} problems, simply referred to hereafter as \textit{bilevel optimization} or BiLO, a very general class of bilevel problems where all constraints and objectives may involve non-linear terms and integer variables. 
At a high level, we have identified three limitations of existing computational methods for BiLO:
\begin{enumerate}[leftmargin=0.7cm]
\itemsep0em 
\item The state-of-the-art exact solvers of~\citet{fischetti2017new} and~\citet{tahernejad2020branch} are limited to mixed-integer bilevel \textit{linear} problems and do not scale well. When high-quality solutions to large-scale problems are sought after, such exact solvers may be prohibitively slow.
\item Specialized algorithms, heuristic or exact, do not generalize beyond the single problem they were designed for. For instance, the state-of-the-art exact Knapsack Interdiction solver~\cite{weninger2023fast} only works for a single knapsack constraint and fails with two or more, a significant limitation even if one is strictly interested in knapsack-type problems.
\item 
Existing methods, exact or heuristic, generic or specialized, are not designed for the ``data-driven algorithm design'' setting~\cite{balcan2020data} in which similar instances are routinely solved and the goal is to construct generalizable high-efficiency algorithms that leverage historical data.
\end{enumerate}
\itemsep0em 
\method{} (for~\textit{Neural Bilevel Optimization}) is a learning-based framework for bilevel optimization that deals with these issues simultaneously. The following observations make~\method{} possible:
\begin{enumerate}[leftmargin=0.7cm]
\itemsep0em 
\item \textbf{Data collection is ``easy'':} For a fixed decision of the leader's, the optimal value of the follower can be computed by an appropriate (single-level) solver (e.g., for mixed-integer programming (MIP) or convex programming), enabling the collection of samples of the form: (leader's decision, follower's value, leader's value).
\item \textbf{Offline learning in the data-driven setting:} While obtaining data online may be prohibitive, access to historical training instances affords us the ability to construct, offline, a large dataset of samples that can then serve as the basis for learning an approximate value function using supervised regression. The output of this training is a regressor mapping a pair consisting of an instance and a leader's decision to an estimated follower or leader value.
\item \textbf{MIP embeddings of neural networks:} If the regressor is MIP-representable, e.g., a feedforward ReLU neural network or a decision tree, it is possible to use a MIP solver to find the leader's decision that minimizes the regressor's output. This MIP, which includes any leader constraints, thus serves as an approximate single-level surrogate of the original bilevel problem instance.
\item \textbf{Follower constraints via the value function reformulation:} The final ingredient of the~\method{} recipe is to include any of the follower's constraints, some of which may involve leader variables. This makes the surrogate problem a heuristic version of the well-known \textit{value function reformulation} (VFR) in BiLO. The VFR transforms a bilevel problem into a single-level one, assuming that one can represent the follower's value (as a function of the leader's decision) compactly. This is typically impossible as the value function may require an exponential number of constraints, a bottleneck that is circumvented by our small (approximate) regression models.
\item \textbf{Theoretical guarantees:}
For interdiction problems, a class of BiLO problems that attract much attention, \method{} solutions have a constant, additive absolute optimality gap which mainly depends on the prediction accuracy of the regression model.
\end{enumerate}

Through a series of experiments on (i) the bilevel knapsack interdiction problem, (ii) the ``critical node problem'' from network security, (iii) a donor-recipient healthcare problem, and (iv) the DNDP, we will show that~\method{} is easy to train and produces, very quickly, heuristic solutions that are competitive with state-of-the-art methods.

\section{Background}
\label{sec:background}
Bilevel optimization (BiLO) deals with hierarchical problems where the \textit{leader} (or \textit{upper-level}) problem decides on $\vect x \in \gX$ and parameterizes the \textit{follower} (or \textit{lower-level}) problem that decides on $\vect y \in \gY$; the sets $\gX$ and $\gY$ represent the domains of the variables (continuous, mixed-integer, or pure integer). Both problems have their own objectives and constraints, resulting in the following model:%
\begin{subequations}
\begin{align}
    \min_{\vect x \in \gX, \vect y} \quad & F(\vect x, \vect y) \\
    \text{s.t.} \quad & G(\vect x, \vect y) \geq \vect 0, \label{eq:coupling}\\
    & \vect y \in \argmax_{\vect y^\prime \in \gY}\{f(\vect x, \vect y^\prime): g(\vect x, \vect y^\prime) \geq \vect 0\}, 
\end{align}
\label{eq:bilevel}%
\end{subequations}
where we consider the general mixed-integer non-linear case with $F, f:\gX\times\gY \to \R$, ~$G:\gX\times\gY \to \R^{m_1}$, and~$g:\gX\times\gY \to \R^{m_2}$ non-linear functions of the upper-level $\vect x$ and lower-level variables $\vect y$.

The applicability of exact (i.e., global) approaches critically depends on the nature of the lower-level problem. A continuous lower-level problem admits a single-level reformulation that leverages the Karush-Kuhn-Tucker (KKT) conditions as constraints on $\vect y$. For linear programs in the lower level, strong duality conditions can be used in the same way. Solving a BiLO problem with integers in the lower level necessitates more sophisticated methods such as branch and cut \cite{denegre2009branch,fischetti2017new} along with some assumptions:~\citet{denegre2009branch} do not allow for coupling constraints (i.e., $G(\vect x, \vect y) = G(\vect x)$) and both methods do not allow continuous upper-level variables to appear in the linking constraints ($g(\vect x, \vect y)$). Other approaches, such as Benders decomposition, are also applicable~\cite{fontaine2014benders}. \citet{gumucs2005global} propose single-level reformulations of mixed-integer non-linear BiLO problems using polyhedral theory, an approach that only works for small problems. Later, ``branch-and-sandwich'' methods were proposed \cite{kleniati2015generalization, paulavivcius2020new} where bounds on both levels' value functions are used to compute an optimal solution. Algorithms for non-linear BiLO generally do not scale well. \citet{kleinert2021survey} survey more exact methods.

\paragraph{Assumptions.} In what follows, we make the following standard assumptions:%
\begin{enumerate}[leftmargin=0.7cm]
\itemsep0em 
    \item Either (i) the follower's problem has a feasible solution for each $\vect x \in \mathcal X$, or (ii) there are no coupling constraints in the leader's problem, i.e., $G(\vect x, \vect y)=G(\vect x)$; \label{as:1}
    \item The optimal follower value is always attained by a feasible solution \citep[see][Section 7.2]{beck2021gentle}. \label{as:2}
\end{enumerate} 

\paragraph{Value function reformulation.} We consider the so-called \textit{optimistic} setting: if the follower has multiple optima for a given  decision of the leader's, the one that optimizes the leader's objective is implemented. We can then rewrite problem \eqref{eq:bilevel} using the \textit{value function reformulation} (VFR):%
\begin{subequations}
\begin{align}
    \min_{\vect x \in \gX, \vect y \in \gY} \quad & F(\vect x, \vect y) \\ \label{eq:leader_obj}
    \text{s.t.} \quad & G(\vect x, \vect y) \geq \vect 0, \\
    & g(\vect x, \vect y) \geq \vect 0, \\
    & f(\vect x, \vect y) \geq \Phi(\vect x), \label{eq:cons_vfr_f}
\end{align}%
\label{eq:vfr}%
\end{subequations}
with the \textit{optimal lower-level value function} defined as%
\begin{equation}
    \Phi(\vect x) = \max_{\vect y \in \gY} \{f(\vect x, \vect y): g(\vect x, \vect y) \geq \vect 0\}. \label{eq:follower}
\end{equation}%
\citet{lozano2017value} used this formulation to construct an exact algorithm (without any public code) for solving mixed-integer non-linear BiLO problems with purely integer upper-level variables. \citet{sinha2016solving,sinha2017bilevel,sinha2018bilevel} propose a family of evolutionary heuristics for continuous non-linear BiLO problems that approximate the optimal value function by using quadratic and Kriging (i.e., a function interpolation method) approximations. Taking it one step further, \citet{beykal2020domino} extend the framework of the previous authors to handle mixed-integer variables in the lower level.

\section{Methodology}
\label{sec:methods}
\method{} refers to two learning-based single-level reformulations for {general} BiLO problems. The reformulations rely on representing the thorny nested structure of a BiLO problem with a trained regression model that predicts either the upper-level or lower-level value functions.~\Cref{{app:pseudocode}} includes pseudocode for data collection, training, and model deployment.

\subsection{\method{}}
\paragraph{Upper-level approximation.}
The obvious bottleneck in solving BiLO problems is their nested structure. One rather straightforward way of circumventing this difficulty is to get rid of the lower level altogether in the formulation, but predict its optimal value. Namely, we predict the optimal upper-level objective value function as
\begin{equation}
    \text{NN}^u(\vect x; \Theta) \approx F(\vect x, \vect y^\star), \label{eq:nnu}
\end{equation}%
where $\Theta$ are the weights of a neural network, $F$ the objective function of the leader~\eqref{eq:leader_obj}, and $\vect y^\star$ an optimal solution to the lower level problem \eqref{eq:follower}. To train such a model, one can sample $\vect x$ from $\mathcal{X}$, solve \eqref{eq:follower} to obtain an optimal lower-level solution $\vect y^\star$, and subsequently compute a label $F(\vect x, \vect y^\star)$. We can then model the single-level problem as%
\begin{equation}
\min_{\vect x\in \gX}  \quad  \text{NN}^u(\vect x; \Theta) \quad
\text{ s.t. } G(\vect x) \geq \vect 0,
\label{eq:upper_level_approx}%
\end{equation}
where we only optimize for $\vect x$ and thus dismiss the lower-level constraints and objective function. A trained feedforward neural network $\text{NN}^u(\cdot; \Theta)$ with ReLU activations can be represented as a mixed-integer linear program (MILP) \cite{fischetti2018deep}, where now the input (and output) of the network are decision variables. With this representation, Problem \eqref{eq:upper_level_approx} becomes a single-level problem and can be solved using an off-the-shelf MIP solver. Note that linear and decision tree-based models also admit MILP representations~\cite{lombardi2017empirical}.

This reformulation is similar to the approach by \citet{bagloee2018hybrid}, wherein the upper-level value function is predicted using linear regression. Our method differs in that it is not iterative and does not require the use of ``no-good cuts'' (which avoid reappearing solutions $\vect x$). As such, our method is extremely efficient as will be shown experimentally.

The formulation of \eqref{eq:upper_level_approx} only allows for problem classes that do not have coupling constraints, i.e., $G(\vect x, \vect y) = G(\vect x)$. Moreover, the feasibility of a solution $\vect x$ in the original BiLO problem is not guaranteed, an issue that will be addressed later in this section (see~\textbf{Bilevel feasibility.}). 

\paragraph{Lower-level approximation.}
This method makes use of the VFR~\eqref{eq:vfr}. The VFR moves the nested complexity of a BiLO to constraint \eqref{eq:cons_vfr_f}, where the right-hand side is the optimal value of the lower-level problem, parameterized by $\vect x$. We introduce a learning-based VFR in which $\Phi(\vect x)$ is approximated by a regression model with parameters $\Theta$:
\begin{equation}
    \text{NN}^l(\vect x; \Theta) \approx \Phi(\vect x). \label{eq:nnl}
\end{equation}
Both $\text{NN}^l$ and $\text{NN}^u$ take in a leader's decision as input and require solving the follower \eqref{eq:follower} for data generation. By replacing $\Phi(\vect x)$ with $\text{NN}^l(\vect x; \Theta)$ in \eqref{eq:cons_vfr_f} and introducing a slack variable $s \in \sR_+$, the surrogate VFR reads as: %
\begin{subequations}
\begin{align}
    \min_{\substack{\vect x \in \gX, \vect y \in \gY\\s\geq 0}} \quad & F(\vect x, \vect y) + \lambda s \label{eq:objective_with_slack}\\
    \text{s.t.} \quad & G(\vect x, \vect y) \geq \vect 0, \\
    & g(\vect x, \vect y) \geq \vect 0, \\
    & f(\vect x, \vect y) \geq \text{NN}^l(\vect x; \Theta) - s. \label{eq:cons_vfr_ml_slack}
\end{align}
\label{eq:lower_level_approx}%
\end{subequations}
All follower and leader constraints of the original BiLO problem are part of Problem \eqref{eq:lower_level_approx}. However, without the slack variable $s$, the problem could become infeasible due to inaccurate predictions by the neural network. This happens when $\text{NN}^l(\vect x; \Theta)$ strictly overestimates the follower's optimal value for each $\vect x$. In this case, there does not exist a follower decision for which Constraint \eqref{eq:cons_vfr_ml_slack} is satisfied. A value of $s>0$ can be used to make Constraint~\eqref{eq:cons_vfr_ml_slack} satisfiable at a cost of $\lambda s$ in the objective, guaranteeing feasibility.

\paragraph{Bilevel feasibility.} \label{sec:bilevel_feas}
Given a solution $\vect x^\star$ or a solution pair $(\vect x^\star, \tilde{\vect y})$ returned by our upper- or lower-level approximations, respectively, we would like to produce a lower-level solution $\vect y^\star$ such that $(\vect x^\star, \vect y^\star)$ is bilevel-feasible, i.e., it satisfies the original BiLO in~\eqref{eq:bilevel}.
The following procedure achieves this goal:%
\begin{enumerate}[leftmargin=0.7cm]
\itemsep0em 
    \item Compute the follower's optimal value under $\vect x^\star$, $\Phi(\vect x^\star)$, by solving \eqref{eq:follower}. \label{alg:solve_follower}
    \item Compute a bilevel-feasible follower solution $\vect y^\star$ by solving problem \eqref{eq:vfr} with fixed $\vect x^\star$ and the right-hand side of \eqref{eq:cons_vfr_f} set to $\Phi(\vect x^\star)$, a constant. Return $(\vect x^\star ,\vect y^\star )$. \label{alg:optim_res}
\end{enumerate}%
If only Assumption~\ref{as:1}(i) is satisfied, then only the lower-level approximation is applicable and this procedure guarantees an optimistic bilevel-feasible solution for it.
If only Assumption~\ref{as:1}(ii) is satisfied, then this procedure can detect in Step 1 that an upper-level approximation's solution $\vect x^\star$ does not admit a follower solution, i.e., that it is infeasible, or calculates a feasible $\vect y^\star$ if one exists in Step 2.
If both Assumptions~\ref{as:1}(i) and~\ref{as:1}(ii) are satisfied simultaneously, then this procedure guarantees an optimistic bilevel-feasible solution for either approximation.

\paragraph{Upper- v.s. lower-level level approximation.}\label{sec:comparison_upper_lower}
Here, we note two important trade-offs between the upper- and lower-level approximations.  
\begin{itemize}[leftmargin=0.7cm]
\itemsep0em 
\item[--]
\textbf{Generality}: Example~\ref{exa:generality} in Appendix~\ref{app:example} shows that under Assumption~\ref{as:1}(ii), it may happen that solving the upper-level approximation problem variant \eqref{eq:upper_level_approx} returns an infeasible solution while the lower-level variant \eqref{eq:lower_level_approx} does not. 
\item[--] \textbf{Scalability}: The upper-level approximation has fewer variables and constraints than its lower-level counterpart as it does not represent the follower's problem directly. For problems in which the lower-level problem is large, e.g., necessitating constraints for each node and link to enforce a network flow in the follower solution as in the DNDP from the introduction, this property makes the upper-level approximation easier to solve, possibly at a sacrifice in final solution quality. This tradeoff will be assessed experimentally.
\end{itemize}

\paragraph{Limitations.} 
{Since~\method{} is in essence a learning-based \textit{heuristic}, it does not guarantee an optimal solution to the bilevel problem. However, it guarantees a feasible solution with the lower-level approximation and can only give an infeasible solution while using the upper-level approximation when only Assumption~\ref{as:1}(ii) is satisfied.}
Moreover, as will be shown in Section~\ref{sec:guarantees}, the performance of \method{} depends on the regression error, which is generally the case when integrating machine learning in optimization algorithms.  Empirically, we note that the prediction error achieved on every problem is very low (see Appendix~\ref{app:pred_error}). 

\subsection{Model architecture}
For ease of notation in previous sections, all regression models take as input the upper-level decision variables. However, in our experiments, we leverage instance information as well to train \textit{a single model} that can be deployed on a family of instances.  This is done by leveraging information such as coefficients in the objective and constraints for each problem. 

For the model's architecture, the general principle deployed is to first explicitly represent or learn instance-based features.  The second is to combine instance-based features with (leader) decision variable information to make predictions.  

The overall architecture can be summarized as the following set of operations. 
Fix a particular instance of a BiLO problem and let $n$  be the number of leader variables, $\vect f_i$ a vector of features for each leader variable $\vect x_i$ (independently of the variable's value), and $h(\vect x_i)$ a feature map that describes the $i$th leader variable for a specific value of that variable. The functions $\Psi^s, \Psi^d$, and $\Psi^v$ are neural networks with appropriate input-output dimensions. The vector $\Theta$ includes all learnable parameters of networks $\Psi^s, \Psi^d,$ and $\Psi^v$. The functions $\textsc{Sum}, \textsc{Concat},$ and $\textsc{Aggregate}$ sum up a set of vectors, concatenate two vectors into a single column vector, and aggregate a set of scalar values (e.g., by another neural network or simply summing them up), respectively. Our final objective value predictions are then given by the following sequence of steps: 
\begin{enumerate}[leftmargin=0.7cm]
\itemsep0em 
    \item Embedding the set of variable features $\{\vect f_i\}$  using a set-based architecture, e.g., the same network $\Psi^d$, summing up the resulting $n$ variable embeddings, then passing the resulting vector to network $\Psi^s$, yielding a vector we refer to as the \textsc{InstanceEmbedding}:
    $$\textsc{InstanceEmbedding}=\Psi^s(\textsc{Sum}(\{\Psi^d(\vect f_i)\}_{i=1}^{n})).$$
    This is akin to the DeepSets approach of~\citet{zaheer2017deep}. However, note that this step can alternatively be done via a feedforward or graph neural network depending on the problem structure. 

    \item Conditional on a specific assignment of values to the leader's decision vector $\vect x$, a per-variable embedding is computed by network $\Psi^v$ to allow for interactions between the~\textsc{InstanceEmbedding} and the specific assignment of variable $i$ as represented by $h(\vect x_i)$:
    $$\textsc{VariableEmbedding}(i)=\Psi^v(\textsc{Concat}(h(\vect x_i),\textsc{InstanceEmbedding})).$$

    \item The final value prediction for either of our approximations aggregates the variable embeddings possibly after passing them through a function $g_i$:
    $$\text{NN}(\vect x; \Theta)=\textsc{Aggregate}(\{g_i(\textsc{VariableEmbedding}(i))\}_{i=1}^{n}).$$
    For example, if the follower's objective is a linear function and $\textsc{VariableEmbedding}(i)$ is a scalar, then it is useful to use the variable's known objective function coefficient $d_i$ here, i.e.: $g_i(\textsc{VariableEmbedding}(i)) = d_i\cdot \textsc{VariableEmbedding}(i)$. The final step is to aggregate the per-variable $g_i(\cdot)$ outputs, e.g., by a summation for linear or separable objective functions.

\end{enumerate}

\method{} is largely agnostic to the learning model utilized as long as it is MILP-representable. In our experiments, we primarily focus on neural networks, but for some problems also explore the use of gradient-boosted trees. More details on the specific architectures for each problem can be found in Appendix~\ref{app:ml_details}. 

\subsection{Approximation guarantees}\label{sec:guarantees}
\paragraph{Lower-level approximation.} Next, we present an approximation guarantee for the lower-level approximation with $\text{NN}^l(\vect x; \Theta)$. Appendix~\ref{app:proofs} includes the complete proofs.

Since the prediction of the neural network is only an approximation of the true optimal value of the follower's problem $\Phi(\vect x)$, \method{} may return sub-optimal solutions for the original problem \eqref{eq:bilevel}. We derive approximation guarantees for a specific setup that appears in interdiction problems: the leader and the follower have the same objective function (i.e., $f(\vect x,\vect y) = F(\vect x,\vect y)$ for all $\vect x\in \mathcal X, \vect y\in\mathcal Y$), and Assumption~\ref{as:1}(i) holds. Consider a neural network that approximates the optimal value of the follower's problem up to an absolute error of $\alpha>0$, i.e.,
\begin{equation}\label{eq:approx_error_definition}
    |\text{NN}^l(\vect x; \Theta)-\Phi(\vect x)|\le \alpha \quad \text{ for all } \vect x\in \mathcal X. 
\end{equation}
Furthermore, we define the parameter $\Delta$ as the maximum difference $f(\vect x,\vect y)-f(\vect x,\vect y^\prime)\ge 0$ over all $\vect x\in \mathcal X, \vect y, \vect y^\prime\in\mathcal Y$ such that no $\vect{\tilde y}\in \mathcal Y$ exists which has function value $f(\vect x,\vect y)> f(\vect x, \vect{\tilde y})> f(\vect x,\vect y^\prime)$.
We can bound the approximation guarantee of the lower-level \method{} as follows:
\begin{theorem} \label{thm:approx_guarantee}
If the leader and the follower have the same objective function and $\lambda>1$, \method{} returns a feasible solution $(\vect x^\star,\vect y^\star)$ for Problem \eqref{eq:bilevel} with objective value %
\[f(\vect x^\star,\vect y^\star)\le \text{\normalfont opt} + 3\alpha + \frac{2}{\lambda}\Delta,\]
where {\normalfont opt} is the optimal value of \eqref{eq:bilevel} and $\lambda$ the penalty term in \eqref{eq:objective_with_slack} .
\end{theorem}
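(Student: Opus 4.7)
The plan is to bound $f(x^\star,y^\star)=\Phi(\hat x)$ (the two are equal in the interdiction setting $f=F$, since the bilevel-feasibility repair selects a follower-optimal $y^\star$ for the surrogate's choice $\hat x$) relative to $\text{opt}=\min_x\Phi(x)$. I will work with two main inequalities produced by (i) surrogate optimality against a natural candidate and (ii) the surrogate's follower-value constraint together with the approximation error on $\text{NN}^l$, and then close the argument using the definition of $\Delta$.

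First, since $\text{NN}^l(x^{\text{opt}};\Theta)\le\Phi(x^{\text{opt}})+\alpha=\text{opt}+\alpha$ by~\eqref{eq:approx_error_definition}, the triple $(x^{\text{opt}},y^{\text{opt}},\alpha)$ is feasible for the surrogate~\eqref{eq:lower_level_approx} with objective value exactly $\text{opt}+\lambda\alpha$. By optimality of $(\hat x,\hat y,\hat s)$ this gives $f(\hat x,\hat y)+\lambda\hat s\le\text{opt}+\lambda\alpha$. Second, constraint~\eqref{eq:cons_vfr_ml_slack} combined with $\text{NN}^l(\hat x;\Theta)\ge\Phi(\hat x)-\alpha$ yields $\Phi(\hat x)\le f(\hat x,\hat y)+\hat s+\alpha$.

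The step where the $\Delta$ assumption must enter is a structural analysis of $(\hat y,\hat s)$ conditional on $\hat x$. Because the surrogate objective is piecewise linear in $f(\hat x,y)$ with a kink at $\text{NN}^l(\hat x;\Theta)$ and because $\lambda>1$, the inner optimum over $y$ is attained either at the smallest feasible value $\ge\text{NN}^l(\hat x;\Theta)$ (giving $\hat s=0$) or at the largest feasible value $<\text{NN}^l(\hat x;\Theta)$ (giving $\hat s=\text{NN}^l(\hat x;\Theta)-f(\hat x,\hat y)\le\Delta$ by the adjacent-gap definition of $\Delta$, provided both sides of the kink are populated). A case analysis on these two structural branches, combined with both inequalities above and a third invocation of the approximation bound (to convert $f(\hat x,\hat y)$ into $\Phi(\hat x)$ via $\text{NN}^l(\hat x;\Theta)\le\Phi(\hat x)+\alpha$), should recover the claimed bound: the three $\alpha$'s come from the three uses of the NN error, while the $2\Delta/\lambda$ arises from exchanging the slack-penalty term $\lambda\hat s$ with its structural bound $\lambda\Delta$ and dividing through the surrogate inequality by $\lambda$.

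The hard part will be precisely controlling the $\lambda\alpha$ term in the surrogate-optimality bound. A direct combination of the two main inequalities only yields $\Phi(\hat x)\le\text{opt}+(\lambda+1)\alpha$, which diverges with $\lambda$; the proof must therefore exploit the structural dichotomy $\hat s\in\{0\}\cup(0,\Delta]$ to cancel this term, leaving behind only the constant-order offset $3\alpha$ plus the $\lambda$-shrinking contribution $2\Delta/\lambda$. Keeping track of whether $\hat x$ falls in the ``underestimation'' or ``overestimation'' regime of $\text{NN}^l$ relative to $\Phi$ is, I expect, the most delicate part of the bookkeeping.
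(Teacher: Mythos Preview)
Your two opening inequalities are sound, and you correctly diagnose the obstacle: combining them directly yields only $\Phi(\hat x)\le\text{opt}+(\lambda+1)\alpha$. But your proposed fix---the structural dichotomy on $\hat s$ at $\hat x$---attacks the wrong slack. The $\lambda\alpha$ term does not come from $\hat s$; it comes from the slack value $\alpha$ that \emph{you} assigned in the candidate $(x^{\text{opt}},y^{\text{opt}},\alpha)$. Bounding $\hat s$ (even sharply, by $\Delta/\lambda$) cannot touch it: in the extreme case $\hat s=0$, inequality (i) still gives $f(\hat x,\hat y)\le\text{opt}+\lambda\alpha$, and (ii) then gives $\Phi(\hat x)\le\text{opt}+(\lambda+1)\alpha$. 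No amount of case analysis on $\hat s$ alone closes this.

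The paper's proof differs exactly at the choice of candidate. Instead of $(x^{\text{opt}},y^{\text{opt}},\alpha)$, it plugs in $(x^{\text{opt}},y^{\text{opt}}_{\text{NN}},s^\star(x^{\text{opt}},y^{\text{opt}}_{\text{NN}}))$, where $y^{\text{opt}}_{\text{NN}}$ is the \emph{surrogate-optimal follower response at $x^{\text{opt}}$}. The structural lemma you describe (the paper's Lemma~\ref{lem:approx_guarantee}) is then applied \emph{twice}: once at $\hat x$ to get $\Phi(\hat x)\le f(\hat x,\hat y)+\alpha+\Delta/\lambda$, and once at $x^{\text{opt}}$ to get $f(x^{\text{opt}},y^{\text{opt}}_{\text{NN}})\ge \Phi(x^{\text{opt}})-\alpha-\Delta/\lambda$, which in turn bounds the slack at $x^{\text{opt}}$. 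Your plan applies the structural analysis only at $\hat x$; the paper applies it at both endpoints of the surrogate-optimality comparison, and this second application is what is meant to replace the $\lambda\alpha$ term by constants. (A caution when you read the paper's argument: the displayed surrogate-optimality inequality there is written without the $\lambda$ factors on the slack terms; be alert to this when checking the chain of inequalities.)
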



\paragraph{Upper-level approximation.} As Example~\ref{exa:generality} shows, it may happen that the upper-level surrogate problem \eqref{eq:upper_level_approx} returns an infeasible solution and hence no approximation guarantee can be derived in this case. However, in the case where all leader solutions are feasible and the neural network predicts for every $\vect x\in\mathcal X$ an upper-level objective value that deviates at most $\alpha>0$ from the true one, then the returned solution trivially approximates the true optimal value with an absolute error of at most $2\alpha$. This follows since the worst that can happen is that the objective value of the optimal solution is overestimated by $\alpha$ while a solution with objective value $\text{opt} + 2\alpha$ is underestimated by $\alpha$ and hence has the same predicted value as the optimal solution. Problem \eqref{eq:upper_level_approx} then may return the latter sub-optimal solution.

\section{Experimental Setup}
\label{sec:setup}

\textbf{Benchmark problems} and their characteristics are summarized in Table~\ref{tab:problem_summary}; their MIP formulations are deferred to Appendix~\ref{sec:formulations} and brief descriptions follow:
\begin{itemize}[leftmargin=0.7cm]
\itemsep0em 
\item[--]
\textbf{Knapsack interdiction (KIP)~\cite{caprara2016bilevel}:} The leader interdicts a subset of at most $k$ items and the follower solves a knapsack problem over the remaining (non-interdicted) items. The leader aims to minimize the follower's (maximization) objective.
\item[--]\textbf{Critical node problem (CNP)~\cite{dragotto2023critical,carvalho2023integer}:} This problem regards the protection (by the leader) of resources in a network against malicious follower attacks. It has applications in the protection of computer networks against cyberattacks as demonstrated by~\citet{dragotto2023critical}. 
\item[--]\textbf{Donor-recipient problem (DRP)~\cite{ghatkar2023solution}:} This problem relates to the donations given by certain agencies to countries in need of, e.g., healthcare projects. The leader (the donor agency) decides on which proportion of the cost, per project, to subsidize, whereas the follower (a country) decides which projects it implements.
\item[--]\textbf{Discrete network design problem (DNDP)~\cite{rey2020computational}:} This is the problem described in~\Cref{sec:introduction}. We build on the work of~\citet{rey2020computational} who provided benchmark instances for the transportation network of Sioux Falls, South Dakota, and an implementation of the state-of-the-art method of~\citet{fontaine2014benders}. This network and corresponding instances are representative of the state of the DNDP in the literature. 
\end{itemize} 

\begin{table}[htbp!]
\centering
\resizebox{.5\columnwidth}{!}{
\begin{tabular}{llllllll}
    \toprule
    {} & \multicolumn{3}{c}{Leader}  & &  \multicolumn{3}{c}{Follower} \\
    \hhline{~---~---}
     {Problem} & {$\vect x$} & {Obj.} & {Cons.} & & {$\vect y$} & {Obj.} & {Cons.}   \\
    \midrule
    KIP\hfill($\downarrow \uparrow$)     & B       & Lin     &  Lin    &   & B          &  Lin         & Lin     \\
    CNP\hfill($\uparrow \uparrow$)    & B       & BLin   &  Lin   &    & B          &  BLin       & Lin       \\
    DRP\hfill($\uparrow \uparrow$)    & C   & Lin     &  Lin     &   & MI   &  Lin         & BLin    \\
    DNDP \hfill($\downarrow \uparrow$)   & B       & NLin   &  Lin   &    & C          &    NLin    & Lin  \\
    \bottomrule
\end{tabular}}
\vspace{1mm}
\caption{Problem class characteristics. All problems have a single budget constraint in the leader; for the follower, the DNDP has network flow constraints whereas other problems have a knapsack constraint. The arrows refer to minimization ($\downarrow$) or maximization ($\uparrow$) in leader and follower, respectively.
B = Binary, C = Continuous, MI = Mixed-Integer, Lin = Linear, BLin = Bilinear, NLin = Non-Linear. 
}
\label{tab:problem_summary}
\end{table}
\paragraph{Baselines.}
As mentioned previously, the branch-and-cut (B\&C) algorithm by \citet{fischetti2017new} is considered to be state-of-the-art for solving mixed-integer linear BiLO.  The method is applicable if the continuous variables of the leader do not appear in the follower's constraints. Both KIP and CNP meet these assumptions. This algorithm will act as the baseline for these problems. 
For DRP, we compare against the results produced by an algorithm in the branch-and-cut paradigm (\drbaseline{}) from \citet{ghatkar2023solution}. 
For DNDP, the follower's problem only has continuous variables, so the baseline is a method based on KKT conditions (\mkkt)~\cite{fontaine2014benders}.  
Of the learning-based approaches for BiLO, we compare against \citet{zhou2024learning}, given the generality of their approach and the availability of source code.
\method{} decisively outperforms this method on KIP, finding solutions with $10$-$100\times$ smaller mean relative error roughly $1000\times$ faster; full results are deferred to Appendix~\ref{app:isnn_results}.  

\paragraph{Data collection \& Training.}
For each problem class, data is collected by sampling feasible leader decisions $\vect x$ and then solving $\Phi(\vect x)$ to compute either the upper- or lower-level objectives as labels. We then train regression models to minimize the least-squares error on the training samples.  Typically, data collection and training take less than one hour, a negligible cost given that for larger instances baseline methods require more time~\textit{per instance}. Additionally, the same trained model can be used on multiple unseen test instances. We report times for data collection and training in Appendix~\ref{app:ml_times}. 


\paragraph{Evaluation \& Setup.}
For evaluation in KIP, CNP, and DRP, all solving was limited to 1 hour.  For DNDP, we consider a more limited-time regime, wherein we compare \method{} at 5 seconds against the baseline at 5, 10, and 30 seconds. For all problems, we evaluate both the lower- and upper-level approximations with neural networks, namely \mls{} and \mlu{}.  
For \mls{} we set $\lambda=1$ for all results presented in the main paper. 
Details of the computing setup are provided in Appendix~\ref{app:setup}.
Our code and data are available at \url{https://github.com/khalil-research/Neur2BiLO}. 

\section{Experimental Results}
\label{sec:experiments}
We summarize the results as measured by average solution times and mean relative errors (MREs).  The relative error on a given instance is computed as $100\cdot\frac{|obj_\mathcal{A}-obj_{best}|}{|obj_{best}|}$, where $obj_\mathcal{A}$ is the value of the solution found by method $\mathcal{A}$ and $obj_{best}$ is the best-known objective value for that instance.  These results are reported in Table~\ref{tab:results}.  More detailed results and box-plots of the distributions of relative errors are in Appendices~\ref{app:obj_results} and~\ref{app:boxplots}. 
Our experimental design answers the following questions:

\begin{table}[!htb]
\centering
\begin{minipage}[t]{.7\linewidth}
    \centering
    \resizebox{1.0\textwidth}{!}{
        \begin{tabular}{cc|rr|rr|rr|rr}
    \toprule
    \multicolumn{10}{c}{Knapsack Interdiction Problem}  \\
    \toprule
    \# Items & Interdiction & \multicolumn{2}{c|}{\mls}  & \multicolumn{2}{c|}{\mlu} & \multicolumn{2}{c|}{\greedy} & \multicolumn{2}{c}{\solver} \\
     ($n$) & Budget ($k$) & MRE & Time & MRE\  & Time\  & MRE\ \  & Time\ \  & MRE\ \ \  & Time\ \ \  \\
    \midrule
    18 & 5 & 1.48 & 0.59 & 1.48 & 0.34 & 1.82 & \textbf{0.14} & \textbf{0.00} & 9.55 \\
    18 & 9 & 1.51 & 0.59 & 1.51 & 0.43 & 3.97 & \textbf{0.22} & \textbf{0.00} & 5.81 \\
    18 & 14 & \textbf{0.00} & 0.22 & \textbf{0.00} & 0.17 & 64.22 & \textbf{0.03} & \textbf{0.00} & 0.39 \\
    20 & 5 & 0.41 & 0.62 & 0.41 & 0.45 & 2.19 & \textbf{0.25} & \textbf{0.00} & 23.18 \\
    20 & 10 & 0.99 & 0.66 & 0.99 & 0.58 & 0.99 & \textbf{0.36} & \textbf{0.00} & 10.27 \\
    20 & 15 & 3.57 & 0.32 & 3.57 & 0.19 & 23.39 & \textbf{0.02} & \textbf{0.00} & 0.94 \\
    22 & 6 & 0.71 & 0.19 & 0.71 & \textbf{0.18} & 0.42 & 0.18 & \textbf{0.00} & 42.30 \\
    22 & 11 & 1.01 & 0.28 & 1.01 & \textbf{0.28} & 1.08 & 0.33 & \textbf{0.00} & 16.26 \\
    22 & 17 & 14.43 & 0.24 & 14.43 & 0.15 & 14.43 & \textbf{0.13} & \textbf{0.00} & 0.68 \\
    25 & 7 & 0.44 & 2.66 & 0.44 & 2.42 & 0.44 & \textbf{0.64} & \textbf{0.00} & 137.96 \\
    25 & 13 & 1.42 & 2.75 & 1.42 & 2.79 & 3.85 & \textbf{1.24} & \textbf{0.00} & 48.43 \\
    25 & 19 & 2.49 & 0.48 & 2.49 & 0.38 & 2.49 & \textbf{0.13} & \textbf{0.00} & 1.77 \\
    28 & 7 & 0.39 & 0.67 & 0.39 & 0.74 & 0.26 & \textbf{0.62} & \textbf{0.00} & 309.18 \\
    28 & 14 & 0.75 & 2.10 & 0.75 & 1.45 & 1.37 & \textbf{1.29} & \textbf{0.00} & 120.74 \\
    28 & 21 & 1.14 & 0.45 & 1.14 & 0.49 & 3.16 & \textbf{0.31} & \textbf{0.00} & 4.92 \\
    30 & 8 & \textbf{0.00} & 1.54 & \textbf{0.00} & 1.54 & 0.43 & \textbf{0.97} & \textbf{0.00} & 792.44 \\
    30 & 15 & 0.49 & 3.64 & 0.49 & 3.06 & 0.75 & \textbf{1.35} & \textbf{0.00} & 187.23 \\
    30 & 23 & 2.29 & 1.08 & 2.29 & 0.73 & 4.48 & \textbf{0.25} & \textbf{0.00} & 5.65 \\
    100 & 25 & 0.93 & 10.02 & 0.93 & 8.40 & \textbf{0.00} & \textbf{4.19} & 8.09 & 3,600.40 \\
    100 & 50 & 0.96 & 51.68 & 0.96 & \textbf{49.28} & \textbf{0.04} & 53.74 & 8.96 & 3,600.44 \\
    100 & 75 & \textbf{0.08} & 24.69 & \textbf{0.08} & \textbf{23.78} & 0.12 & 35.27 & 5.87 & 3,600.52 \\
    \midrule
    \multicolumn{2}{l|}{Avg. $n\leq 30$} & 1.86 & 1.06 & 1.86 & 0.91 & 7.21 & \textbf{0.47} & \textbf{0.00} & 95.43 \\
    \multicolumn{2}{l|}{Avg. $n=100$} & 0.66 & 28.80 & 0.66 & \textbf{27.15} & \textbf{0.05} & 31.07 & 7.64 & 3,600.45 \\
    \bottomrule
    \end{tabular}}
\end{minipage}\vspace{1mm}
\begin{minipage}[t]{.5\linewidth}
    \centering
    \resizebox{1.0\textwidth}{!}{    
    \begin{tabular}{cc|rr|rr|rrr}
    \cmidrule[0.8pt]{2-8}
    \multicolumn{9}{c}{Critical Node Problem} \\
    \cmidrule[0.8pt]{2-8}
    & \# Nodes & \multicolumn{2}{c|}{\mls}  & \multicolumn{2}{c|}{\mlu} & \multicolumn{2}{c}{\solver} & \\
    & $(|V|)$ & MRE & Time & MRE\  & Time\  & MRE\ \  & Time\ \  \\
    \cmidrule[0.6pt]{2-8}
    &10 & 3.20 & 0.04 & 2.75 & \textbf{0.02} & \textbf{1.01} & 4.24& \\
    &25 & 2.60 & 0.23 & 1.77 & \textbf{0.05} & \textbf{0.73} & 3,244.20& \\
    &50 & 1.42 & 0.38 & 0.98 & \textbf{0.10} & \textbf{0.67} & 3,600.30 &\\
    &100 & 1.12 & 0.48 & \textbf{0.56} & \textbf{0.42} & 1.79 & 3,600.65& \\
    &300 & 2.01 & 1.12 & \textbf{0.33} & \textbf{0.83} & 2.32 & 3,600.54& \\
    &500 & 1.33 & 1.69 & \textbf{0.45} & \textbf{1.19} & 2.47 & 3,600.80& \\
    \cmidrule[0.6pt]{2-8}
    &Average & 1.95 & 0.66 & \textbf{1.14} & \textbf{0.43} & 1.50 & 2,941.79& \\
    \cmidrule[0.8pt]{2-8}
    \end{tabular}}
\end{minipage}\vspace{1mm}
\begin{minipage}[t]{.6\linewidth}
    \centering
    \resizebox{1.0\textwidth}{!}{    
    \begin{tabular}{cc|rr|rr|rrr}
    \toprule
    \multicolumn{9}{c}{Discrete Network Design Problem} \\
    \toprule
    && \multicolumn{2}{c}{\mls}  & \multicolumn{2}{|c|}{\mlu}  &  \multicolumn{3}{c}{\mkkt} \\\ 
    \# Edges & Budget & MRE & Time & MRE\  & Time\  & MRE-5 & MRE-10 & MRE-30 \\
    \midrule
    10 & 25\% & 0.88 & 2.89 & 4.97 & \textbf{0.01} &  6.08 & 0.51 & \textbf{0.10} \\
    10 & 50\% & 0.06 & 3.20 & 3.93 & \textbf{0.01} &  7.39 & 2.17 & \textbf{0.00} \\
    10 & 75\% & 0.13 & 2.15 & 1.49 & \textbf{0.01} &  5.88 & \textbf{0.05} & 0.06 \\
    20 & 25\% & \textbf{1.16} & 5.01 & 7.91 & \textbf{0.04} &  13.52 & 6.84 & 1.33 \\
    20 & 50\% & 1.45 & 5.01 & 4.30 & \textbf{0.05}  & 16.39 & 9.02 & \textbf{0.84} \\
    20 & 75\% & 0.14 & 2.48 & 4.87 & \textbf{0.01} &  11.02 & 4.07 & \textbf{0.08} \\
    \midrule
    \multicolumn{2}{c|}{Average} & 0.64 & 3.46 & 4.58 & \textbf{0.02} & 10.05 & 3.78 & \textbf{0.40} \\
    \bottomrule
    \end{tabular}}
\end{minipage}
\vspace{1mm}
\caption{Mean relative error (MRE) and solving times for 
KIP, CNP, and DNDP.  
For KIP with $n\leq 30$, we directly evaluate on the 180 instances (10 per size) of \citet{tang2016class}; each value is the average over 10 instances.  For $n=100$, our evaluation instances (100 per size) are generated using the same procedure of \citet{tang2016class}.  The no-learning baseline \greedy{} is a VFR using the follower's greedy solution as lower-level value function approximation.
For CNP, each row is averaged over 300 instances that are randomly sampled using the procedure described in \citet{dragotto2023critical}.
For DNDP, each row is averaged over 10 instances from \citet{rey2020computational}. The budget is a fraction of the total cost of all 30 possible candidate links; see Appendix~\ref{app:ml_times} for more details.  
}
\label{tab:results}
\end{table}

\paragraph{Q1: Can \method{} find high-quality solutions quickly on classical interdiction problems?}
Table~\ref{tab:results} compares \method{} to the B\&C algorithm of \citet{fischetti2017new}.  \method{} terminates in 1-2\% of the time required by \solver{} on the smaller ($n\leq 30$) well-studied KIP instances of~\citet{tang2016class}. However, when the instance size increases to $n=100$, both \mls{} and \mlu{} find much better solutions than~\method{} in roughly 30 seconds, even when \solver{} runs for the full hour. 
Furthermore, Table~\ref{tab:kp_obj_results} in Appendix~\ref{app:obj_results} shows that~\solver{} requires $10$ to $1,000\times$ more time than \mls{} or \mlu{} to find equally good solutions. In addition, the best solutions found by \solver{} at the termination times of \mls{} or \mlu{} are generally worse, even for small instances.

\paragraph{Q2: Do these computational results extend to non-linear and more challenging BiLO problems?}
Interdiction problems such as the KIP are well-studied but are only a small subset of BiLO. 
We will shift attention to the more practical problems, starting with the CNP (Table~\ref{tab:results}).  CNP includes terms that are bilinear (i.e., $z=xy$) in the upper- and lower-level variables, resulting in a much more challenging problem for general-purpose~\solver{}.  In this case, both \mls{} and \mlu{} tend to outperform \solver{} as the problem size increases.  In addition, the results on incumbents reported in Table~\ref{tab:cng_obj_results} in Appendix~\ref{app:obj_results} are as good, if not even stronger than those of KIP.  

Secondly, we discuss DRP (Table~\ref{tab:drp_obj_results} in Appendix~\ref{app:obj_results}).  For DRP, we evaluate on the most challenging instances from \citet{ghahtarani2023double}, all of which have gaps of $\sim 50\%$ at a 1-hour time limit with \drbaseline{}, a specialized \solver{}-based algorithm.  Here \mlu{} performs remarkably well: it finds the best-known solutions on every single instance in roughly $\sim 0.1$ seconds at an average improvement in solution quality of 26\% over \drbaseline{}.  

\paragraph{Q3: How does \method{} perform on BiLO problems with complex constraints?}
Given that \method{} has strong performance on benchmarks with budget constraints, the next obvious question is whether it can be applied to BiLO problems that have complex constraints.  To answer this, we will refer to the results in Table~\ref{tab:results} for the DNDP.  In this setting, we focus on a limited-time regime wherein we compare \method{} with a 5-second time limit to \mkkt{} at time limits 5, 10, and 30 seconds. \mlu{} can achieve high-quality solutions much faster than any other method with only a minor sacrifice in solution quality, making it a great candidate for domains where interactive decision-making is needed (e.g., what-if analysis of various candidate roads, budgets, etc.). 

\mls{}, on the other hand, takes longer than \mlu{} but computes solutions that are competitive with the baseline, the latter requiring $5\times$ more time.  We suspect that the better solution quality from \mls{} is due to its explicit modeling of feasible lower-level decisions that ``align'' with the predictions, whereas \mlu{} may simply extrapolate poorly.  In terms of computing time, one computational burden for \mls{} is the requirement to model the non-linear upper- and lower-level objectives, which requires a piece-wise linear approximation based on~\citet{fontaine2014benders}, a step that introduces additional variables and constraints. Appendix~\ref{app:obj_results} includes results for DNDP with gradient-boosted trees (GBT), demonstrating that other learning models are directly applicable and, in some cases, may even lead to better solution quality, faster optimization, and simpler implementation.  

\paragraph{Q4: Can approximations derived from heuristics be useful?}
We now refer back to KIP and focus on the greedy value function approximation (\greedy{}), a KIP-specific approximation that relies on the fact that greedy algorithms are typically good for 1-dimensional knapsack problems. Namely, the heuristic is based on ordering the items with their value-to-weight ratio \cite{dantzig1957discrete} and is used as the knapsack solution in the follower problem, while still being parameterized by $\vect x$. This heuristic is embedded in a single-level problem as this heuristic is MILP-representable \citep[see][]{avis2019polynomial}; we note that we are not aware of uses in the literature of this approximation and it may be of independent interest.  Generally, \greedy{} performs quite well, and in some cases outperforms \mls{} and \mlu{}, but there are clear cases where \mls{} and \mlu{} outperform \greedy{} demonstrating that learning is beneficial. In addition, heuristics like \greedy{} can be utilized to compute features for \mls{} and \mlu{}.  For KIP, the inclusion of these features derived from \greedy{} strongly improves the results (see Table~\ref{tab:kp_greedy_appendix} in Appendix~\ref{app:ab_greedy}).  This demonstrates that there is value in leveraging any problem-specific MILP-representable heuristics as features for learning.

\paragraph{Q5: How does $\lambda$ affect \mls{}?}
Table~\ref{tab:dndp_results_lambda} in Appendix~\ref{app:ab_lambda} shows that a slack penalty of $\lambda = 0.1$ notably improves the performance of both \mls{} and \gbts{} for DNDP, compared to the $\lambda = 1$ reported in~\Cref{tab:results}. As an alternative to adding slack, one can even dampen predictions of the value function to allow more flexibility using the empirical error observed during training; see Table~\ref{tab:kp_lower_appendix} in Appendix~\ref{app:ab_l_vf}.  

\section{Related Work}%
\paragraph{Learning for bilevel optimization.} 
Besides the approaches of \citet{sinha2016solving,sinha2017bilevel,sinha2018bilevel} and \citet{beykal2020domino} discussed in~\Cref{sec:background}, other learning-based methods have been introduced to solve BiLO problems. \citet{bagloee2018hybrid} present
a heuristic for DNDP which uses a linear prediction of the leader's objective function. An iterative algorithm refines the prediction with new solutions, terminating after a pre-determined number of iterations. 
\citet{chan2022machine} propose to simultaneously optimize the parameters of a learning model for a subset of followers in a large-scale cycling network design problem.  Here, only non-parametric or linear models are utilized as optimizing more sophisticated learning models is generally challenging with MILP-based optimization.  
\citet{molan2023using} make use of a neural network to predict the follower variables. The authors assume a setting with a black-box follower's problem, no coupling constraints, and continuous leader variables. 
Another learning-based heuristic is proposed by \citet{kwon2022solving} for a bilevel knapsack problem. This approach is knapsack-specific and requires a sophisticated, GPU-based, problem-specific graph neural network for which no code is publicly available.  
\citet{zhou2024learning} propose a learning-based algorithm for binary bilevel problems which, similar to our approach, predicts the optimal value function and develops a single-level reformulation based on the trained model. They propose using a graph neural network and an input-supermodular neural network, both of which can only be trained on a single instance rather than learning across classes of instances as \method{} does. \method{} significantly outperforms this method as shown in Appendix~\ref{app:isnn_results}.
For continuous unconstrained bilevel optimization, a substantially different setting, many methods have been proposed recently due to interest in solving nested problems in machine learning (e.g., hyperparameter tuning and meta-learning)~\cite{liu2021investigating}. 

\paragraph{Data-driven optimization.}
The integration of a trained machine learning model into a MIP is a vital element of~\method{}. This is possible due to MILP formulations of neural networks \cite{cheng2017maximum,fischetti2018deep,serra2018bounding}, and of other predictors like decision trees~\cite{lombardi2017empirical,bertsimas2021near}. These methods have become easily applicable due to open software implementations \cite{bergman2022janos,ceccon2022omlt,maragno2023mixed,turner2023pyscipopt} and the \texttt{gurobi-machinelearning} library. One such application is constraint learning \cite{fajemisin2023optimization}. 
More similar to our setting are the approaches in \cite{dumouchelle2022neur2sp,dumouchelle2023neur2ro,kronqvist2023alternating} for predicting value functions of other nested problems such as two-stage stochastic and robust optimization. Our method caters to the specificities of BiLO, particularly in the lower-level approximation which performs well in highly-constrained BiLO settings such as the DNDP, has 
approximation guarantees based on the error of the predictive model,
and computational results on problems with non-linear interactions between the variables in each stage of the optimization problem; these aspects distinguish~\method{} from prior work.

\section{Conclusion}
In both its upper- and lower-level instantiations, \method{} finds high-quality solutions in a few milliseconds or seconds across four benchmarks that span applications in interdiction, network security, healthcare, and transportation planning. In fact, we are not aware of any bilevel optimization method which has been evaluated across such a diverse range of problems as existing methods make stricter assumptions that limit their applicability.~\method{} models are generic, easy to train, and accommodating of problem-specific heuristics as features. One limitation of our experiments is that they lack a problem that involves coupling constraints in~\eqref{eq:coupling}. We could not identify benchmark problems with this property in the literature, but exploring this setting would be valuable.
Of future interest are potential extensions to bilevel~\textit{stochastic} optimization~\cite{beck2023survey}, robust optimization with decision-dependent uncertainty~\cite{goerigk2023connections} (a special case of BiLO), and multi-level problems beyond two levels, e.g.~\cite{leitner2023exact}.

\clearpage

\section*{Acknowledgments}

Dumouchelle, Julien, and Khalil acknowledge funding support from the Natural Sciences and Engineering Research Council Discovery Grant Program and the SCALE AI Research Chair Program. Julien received funding from the Dutch Research Council (Nederlandse Organisatie voor Wetenschappelijk Onderzoek, NWO) under project OCENW.GROOT.2019.015.

\bibliography{main.bib}
\bibliographystyle{plainnat}
\newpage
\appendix

\section{Impact Statement}
Bilevel optimization has been used to model attacker-defender situations, which could be used in defense or similar political contexts. We have attempted to cover more socially beneficial healthcare and transportation planning applications but duly acknowledge that our methods could be applied in rather nefarious domains. That being said, there remain many domains that could benefit from our work and that are widely beneficial, such as in the management of energy systems.

\section{\method{} Pseudocode}
\label{app:pseudocode}
Here, we outline pseudocode for \method{}.  Algorithm~\ref{alg:neur2bilo_train} presents the pseudocode for data collection and training.  Algorithm~\ref{alg:neur2bilo_opt} presents the pseudocode for optimization.  Following Algorithm~\ref{alg:neur2bilo_opt}, the objective is computed via the bilevel feasibility procedure detailed in Section~\ref{sec:bilevel_feas}.  Note that data collection can be done once to collect labels for both the upper- and lower-level approximations.  Additionally,  a single trained model may be (and is in our experiments) evaluated across multiple test instances.
\begin{algorithm}[htbp!]
   \caption{\method{} Data Collection and Training}
   \label{alg:neur2bilo_train}
\begin{algorithmic}
    \STATE {\bfseries Data Collection}
    \begin{ALC@g}
    \STATE {$\mathcal{D} \leftarrow \{\}$}
    \FOR{$i=1$ {\bfseries to} number of instances to sample}
        \STATE $ \mathcal{P} \leftarrow $ sampled instance.  Note that $\mathcal{P}$ is defined by $ F(\cdot), G(\cdot), f(\cdot), g(\cdot), \mathcal{Y}$, and $\mathcal{X}$.  For most BiLO problems, these functions are defined by the constraint and objective coefficients
        \FOR{$j=1$ {\bfseries to} number of decisions per-instance}
            \STATE $\mathbf{x} \leftarrow $ sampled upper-level decision
            \STATE $\mathbf{y}^\star \leftarrow \arg\max_{\mathbf{y} \in \mathcal{Y}}\{f(\mathbf{x}, \mathbf{y}): g(\mathbf{x}, \mathbf{y}) \geq \mathbf{0}\}$
            \STATE Add $(\mathcal{P}, \mathbf{x}, F(\mathbf{x}, \mathbf{y}^\star), f(\mathbf{x}, \mathbf{y}^\star))$ to $\mathcal{D}$
        \ENDFOR
    \ENDFOR
    \STATE {\bfseries return} $\mathcal{D}$
    \end{ALC@g}
    \STATE 
    \STATE {\bfseries Training} 
    \begin{ALC@g}
    \IF {approximating upper-level}
        \STATE Train regressor ($\text{NN}^u$) with features ($\mathcal{P},\mathbf{x}$) and label ($F(\cdot)$) from dataset $\mathcal{D}$
    \ELSIF{approximating lower-level}
        \STATE Train regressor ($\text{NN}^l$) with the features ($\mathcal{P},\mathbf{x}$) and label ($f(\cdot)$) from dataset $\mathcal{D}$
    \ENDIF
    \STATE {\bfseries return} $\text{NN}^u$ or $\text{NN}^l$
    \end{ALC@g}
\end{algorithmic}
\end{algorithm}

\begin{algorithm}[htbp!]
   \caption{\method{} Optimization}
   \label{alg:neur2bilo_opt}
\begin{algorithmic}
    \STATE {\bfseries Input:} Evaluation instance $\mathcal{P}^\prime$, trained model $\text{NN}^u/\text{NN}^u$.  Note that the trained model ($\text{NN}^u/\text{NN}^u$) is used on multiple evaluation instances.  
    \IF {approximating upper-level}
    \STATE $\vect{x}^\star \leftarrow$ upper-level solution from the upper-level approximation (Equation (5))
    \ELSIF{approximating lower-level}
    \STATE $\vect{x}^\star \leftarrow$ upper-level solution from the lower-level approximation (Equation (7))
    \ENDIF
    \STATE {\bfseries return} $\vect x^\star$
\end{algorithmic}
\end{algorithm}



\section{Example Comparing the Upper- and Lower-level Approximations} \label{app:example}
\begin{example} \label{exa:generality}
    Consider the problem%
\begin{align*}
    \min_{x\in\{ 0,1\}}  \ &y \\
    s.t. \quad &y \in \argmax_{y\in \{0,1\}}\left\{ y: 2x+y\le 1\right\}.%
\end{align*}
Solution $x=1$ makes the follower's problem infeasible. For solution $x=0$, the optimal follower solution is $y=1$ leading to the optimal value $1$. Assume that the same trained neural network is used in both approaches; this is possible since leader and follower have the same objective functions. If it predicts NN$(0) = 2$ and NN$(1) = 0$, then the upper-level approximation problem \eqref{eq:upper_level_approx} will return $x = 1$ which is infeasible whereas the lower-level approximation \eqref{eq:lower_level_approx} correctly returns $x=0$.
\end{example}

\section{Proofs for Approximation Guarantees} \label{app:proofs}
This section includes the full analysis of the derived approximation guarantee in Section~\ref{sec:guarantees} for the lower-level approximation with $\text{NN}^l(\vect x; \Theta)$. 

Recall that we look at a specific setup for which we derive approximation guarantees: the leader and the follower have the same objective function (i.e., $f(\vect x,\vect y) = F(\vect x,\vect y)$ for all $\vect x\in \mathcal X, \vect y\in\mathcal Y$), we assume that Assumption~\ref{as:1}(i) holds and that the neural network approximates the optimal value of the follower's problem up to an absolute error of $\alpha>0$, i.e.,
\begin{equation}
    |\text{NN}^l(\vect x; \Theta)-\Phi(\vect x)|\le \alpha \quad \text{ for all } \vect x\in \mathcal X. 
\end{equation}
We furthermore define the parameter $\Delta$ as the maximum difference of functions values $f(\vect x,\vect y)-f(\vect x,\vect y^\prime)\ge 0$ over all $\vect x\in \mathcal X, \vect y, \vect y^\prime\in\mathcal Y$ such that no $\vect{\tilde y}\in \mathcal Y$ exists which has function value $f(\vect x,\vect y)> f(\vect x, \vect{\tilde y})> f(\vect x,\vect y^\prime)$. Note that $\Delta$ can be strictly larger than zero if the follower decisions are integer. 

For a fixed $\vect x\in \mathcal X$, $\vect y_{\text{NN}}^\star(\vect x)$ denotes an optimal solution of \eqref{eq:lower_level_approx}. Furthermore, for any given $\vect y\in \mathcal Y$ we denote by $\vect s^\star(\vect x,\vect y)$ an optimal slack-value in Problem \eqref{eq:lower_level_approx} if the upper- and lower-level variables are fixed to $\vect x$ and $\vect y$, respectively. 

\begin{observation}\label{obs:slack_value}
    For any $\vect x\in \mathcal X$ and $\vect y\in \mathcal Y$ we have
    \[
    \vect s^\star(\vect x,\vect y) = \max\{0, \text{\normalfont NN}^l(\vect x; \Theta) - f(\vect x,\vect y) \}.
    \]
\end{observation}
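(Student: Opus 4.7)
The plan is to fix $\vect x\in\mathcal X$ and $\vect y\in\mathcal Y$ in Problem \eqref{eq:lower_level_approx} and reduce it to a one-dimensional minimization in $s$. Once $\vect x,\vect y$ are fixed, the objective in \eqref{eq:objective_with_slack} becomes $F(\vect x,\vect y)+\lambda s$, where $F(\vect x,\vect y)$ is a constant, and the constraints in \eqref{eq:lower_level_approx} split into two groups: those that do not involve $s$ (namely the feasibility conditions on $\vect x,\vect y$ through $G$ and $g$), which are unaffected by the choice of $s$, and the single slack constraint \eqref{eq:cons_vfr_ml_slack} together with $s\ge 0$.

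Next, I would rearrange \eqref{eq:cons_vfr_ml_slack} as the equivalent lower bound $s\ge \text{NN}^l(\vect x;\Theta)-f(\vect x,\vect y)$ and combine it with $s\ge 0$ to obtain the single binding condition
\[
s\;\ge\;\max\{0,\;\text{NN}^l(\vect x;\Theta)-f(\vect x,\vect y)\}.
\]
Since $\lambda>0$ (indeed $\lambda>1$ is assumed in Theorem~\ref{thm:approx_guarantee}, the context in which this observation is invoked), the reduced objective $\lambda s$ is strictly increasing in $s$, so the minimum is attained by making this lower bound tight. This yields the claimed closed form for $\vect s^\star(\vect x,\vect y)$.

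There is no real obstacle here: the statement is an elementary univariate minimization on a half-line, and the only subtlety worth flagging is that the two cases of the $\max$ correspond to whether the constraint \eqref{eq:cons_vfr_ml_slack} would already be satisfied at $s=0$ (in which case $\vect s^\star=0$) or whether positive slack is strictly needed (in which case $\vect s^\star$ equals the predicted-vs-realized gap $\text{NN}^l(\vect x;\Theta)-f(\vect x,\vect y)$). This dichotomy is precisely what the $\max$ encodes, completing the observation.
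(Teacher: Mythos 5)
Your proof is correct and matches the paper's intent: the paper states this as an unproved Observation, and the reasoning it implicitly relies on (and spells out inside the proof of Lemma~\ref{lem:approx_guarantee}, where it notes that ``since $\lambda>0$'' the optimal slack is the smallest value satisfying Constraint~\eqref{eq:cons_vfr_ml_slack} and $s\ge 0$) is exactly your one-dimensional minimization over the half-line. Your remark that positivity of $\lambda$ is the operative assumption is also consistent with the paper's usage.
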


\begin{lemma}\label{lem:approx_guarantee}
Assume the leader and the follower have the same objective function and $\lambda>1$. Then, for any given $\vect x\in \mathcal X$ the following conditions hold for the optimal follower solution $\vect y_{\text{\normalfont NN}}^\star(\vect x)$ of Problem \eqref{eq:lower_level_approx}:
\begin{itemize}
    \item[--] If $\text{\normalfont NN}^l(\vect x; \Theta)\ge \Phi(\vect x)$, then $f(\vect x,\vect y_{\text{\normalfont NN}}^\star(\vect x))=\Phi(\vect x)$, i.e., $(\vect x, \vect y_{\text{\normalfont NN}}^\star(\vect x))$ is feasible for the original bilevel problem.
    \item[--] If $\text{\normalfont NN}^l(\vect x; \Theta) < \Phi(\vect x)$, then $\text{\normalfont NN}^l(\vect x; \Theta) - \frac{1}{\lambda}\Delta\le f(\vect x,\vect y_{\text{\normalfont NN}}^\star(\vect x)) \le \Phi (\vect x)$.  
\end{itemize}
\end{lemma}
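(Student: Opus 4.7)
The plan is to reduce the inner optimization over $(\vect y, s)$ to a one-dimensional problem in $\vect y$ alone by substituting Observation~\ref{obs:slack_value} to eliminate the slack variable. Writing $N := \text{NN}^l(\vect x;\Theta)$ for brevity, the inner objective at the optimal slack becomes the piecewise-linear function
\begin{equation*}
h(\vect y) \;=\; f(\vect x,\vect y) + \lambda\max\{0,\, N - f(\vect x,\vect y)\},
\end{equation*}
which equals $f(\vect x,\vect y)$ on the branch $\{\vect y : f(\vect x,\vect y) \geq N\}$ and equals $\lambda N - (\lambda-1)f(\vect x,\vect y)$ on its complement. Since $\lambda>1$, on the second branch $h$ is strictly decreasing in $f(\vect x,\vect y)$, whereas on the first branch it is trivially increasing. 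This monotonicity is the structural observation that drives both bullets.

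For the first bullet, I would use the assumption $N \geq \Phi(\vect x)$ to note that every feasible follower $\vect y$ satisfies $f(\vect x,\vect y) \leq \Phi(\vect x) \leq N$, so only the second branch is populated. Minimizing $\lambda N - (\lambda-1) f(\vect x,\vect y)$ is then equivalent to maximizing $f(\vect x,\vect y)$, whose optimum over feasible $\vect y$ is $\Phi(\vect x)$ by definition; hence $f(\vect x,\vect y_{\text{NN}}^\star(\vect x)) = \Phi(\vect x)$ and bilevel feasibility follows.

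For the second bullet, the upper bound $f(\vect x, \vect y_{\text{NN}}^\star(\vect x)) \leq \Phi(\vect x)$ is immediate from feasibility. For the lower bound I would define $f^+$ as the smallest feasible value of $f(\vect x,\vect y)$ with $f(\vect x,\vect y) \geq N$ (which exists because $\Phi(\vect x) > N$), and $f^-$ as the largest feasible value strictly below $N$, if any such value exists. Minimizing $h$ over each branch separately, the first branch attains $f^+$ and the second (if nonempty) attains $\lambda N - (\lambda-1) f^-$. If the first branch wins, then $f(\vect x,\vect y_{\text{NN}}^\star(\vect x)) = f^+ \geq N \geq N - \Delta/\lambda$ and we are done. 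Otherwise optimality gives $\lambda N - (\lambda-1) f^- \leq f^+$, which I would rearrange to $\lambda(N - f^-) \leq f^+ - f^-$. The closing step is to invoke the definition of $\Delta$: by construction no feasible follower value lies in the open interval $(f^-, f^+)$, so $f^+, f^-$ qualify as ``consecutive'' attainable values and thus $f^+ - f^- \leq \Delta$. Dividing by $\lambda$ yields $N - f^- \leq \Delta/\lambda$, i.e., $f^- \geq N - \Delta/\lambda$, as required.

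The hard part will be the bookkeeping around the degenerate cases, in particular verifying that the second branch being empty is benign (the bound becomes trivial) and that $f^-$ and $f^+$ truly satisfy the ``no intermediate attainable value'' condition in the definition of $\Delta$. Once the piecewise structure of $h$ is exposed, the comparison between branches is a one-line algebraic manipulation, and the appeal to $\Delta$ is what converts the neural-network-induced bias into a controlled loss scaled by $1/\lambda$.
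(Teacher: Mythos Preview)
Your proposal is correct and follows essentially the same route as the paper: both eliminate the slack via Observation~\ref{obs:slack_value}, split on whether $N\gtrless\Phi(\vect x)$, and in the underestimation case bound the gap between the chosen follower value and $N$ via the definition of $\Delta$. The only difference is presentational---you minimize the piecewise function $h$ directly over its two branches, whereas the paper phrases the same comparisons as a proof by contradiction (exhibiting an improving $\bar{\vect y}$ whenever the claimed bound is violated); the key inequality $\lambda(N-f^-)\le f^+-f^-\le\Delta$ appears in both arguments.
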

\begin{proof}
Case 1: Let $\vect x\in \mathcal X$ for which it holds $\text{NN}^l(\vect x; \Theta)\ge \Phi(\vect x)$ and assume the opposite of the statement is true, i.e., for the optimal reaction $y_{\text{NN}}^\star(\vect x)$ in \eqref{eq:lower_level_approx} it holds that $\Phi(\vect x)> f(\vect x, \vect y_{\text{NN}}^\star(\vect x))$. Since $\lambda >0$ and due to Constraint \eqref{eq:cons_vfr_ml_slack} the optimal slack value for solution $\vect x$ in Problem \eqref{eq:lower_level_approx} is $\vect s^\star(\vect x, \vect y)=\text{NN}^l(\vect x; \Theta) - f(\vect x,\vect y)$. Assume $\vect y^\star(\vect x)$ is the optimal follower reaction in \eqref{eq:vfr} for $\vect x$, then it holds that:
\begin{align*}
    & f(\vect x,\vect y_{\text{NN}}^\star(\vect x)) + \lambda \vect s^\star(\vect x, \vect y_{\text{NN}}^\star(\vect x)) \\ 
    & = f(\vect x,\vect y_{\text{NN}}^\star(\vect x)) + \lambda \left( \text{NN}^l(\vect x; \Theta) -  f(\vect x,\vect y_{\text{NN}}^\star(\vect x))\right) \\
    & >  f(\vect x,\vect y_{\text{NN}}^\star(\vect x)) + \lambda \left( \text{NN}^l(\vect x; \Theta) -  f(\vect x,\vect y_{\text{NN}}^\star(\vect x))\right) + (\lambda-1)\left( f(\vect x,\vect y_{\text{NN}}^\star(\vect x)) - f(\vect x,\vect y^\star(\vect x)) \right) \\
    & = f(\vect x,\vect y^\star(\vect x)) + \lambda \left( \text{NN}^l(\vect x; \Theta) - f(\vect x,\vect y^\star(\vect x))\right) \\
    & = f(\vect x,\vect y^\star(\vect x)) + \lambda \vect s^\star(\vect x,\vect y^\star(\vect x))
\end{align*}
where the first inequality follows since $\lambda>1$ and $f(\vect x,\vect y^\star(\vect x)) = \Phi(\vect x) >  f(\vect x,\vect y_{\text{NN}}^\star(\vect x))$ and the latter equality follows from $\text{NN}^l(\vect x; \Theta)\ge \Phi(\vect x)=f(\vect x,\vect y^\star(\vect x))$. The latter result shows that the solution $(\vect x, \vect y^\star(\vect x))$ has a strictly better objective value in the surrogate problem \eqref{eq:lower_level_approx} than $(\vect x,\vect y_{\text{NN}}^\star(\vect x))$ which contradicts the optimality of $(\vect x,\vect y_{\text{NN}}^\star(\vect x))$.

Case 2: Let $\vect x\in \mathcal X$ be a leader's decision for which $\text{NN}^l(\vect x; \Theta)< \Phi(\vect x)$ and assume the opposite of the statement, i.e., for the optimal reaction $\vect y_{\text{NN}}^\star(\vect x)$ in \eqref{eq:lower_level_approx} it holds that $\text{NN}^l(\vect x; \Theta) - \frac{1}{\lambda}\Delta > f(\vect x, \vect y_{\text{NN}}^\star(\vect x))$. Hence the optimal slack value in \eqref{eq:lower_level_approx} is 
\begin{equation}\label{eq:bound_slack_variable}
s^\star(\vect x, \vect y_{\text{NN}}^\star(\vect x)) = \text{NN}^l(\vect x; \Theta) - f(\vect x, \vect y_{\text{NN}}^\star(\vect x))>\frac{1}{\lambda}\Delta.\end{equation} 
First, assume there exists another feasible solution $\vect{\bar y}(\vect x)$ for Problem \eqref{eq:lower_level_approx} with
\[
f(\vect x, \vect y_{\text{NN}}^\star(\vect x))<f(\vect x, \vect{\bar y}(\vect x)) < \text{NN}^l(\vect x; \Theta)
\]
then solution $(\vect x,\vect{\bar y}(x))$ has a strictly better objective value than $(\vect x, \vect y_{\text{NN}}^\star(\vect x))$ in \eqref{eq:lower_level_approx} since increasing the value of $f$ by $\delta$ decreases the value of the slack variable by $\delta$ which results in a better objective value since $\lambda >1$, which contradicts the optimality of $(\vect x, \vect y_{\text{NN}}^\star(\vect x))$.

Second, assume there exists no other feasible solution $\vect{\bar y}(\vect x)$ for Problem \eqref{eq:lower_level_approx} with
\[
f(\vect x, \vect y_{\text{NN}}^\star(\vect x))<f(\vect x, \vect{\bar y}(\vect x)) < \text{NN}^l(\vect x; \Theta).
\]
Then there must exists a feasible solution $\vect{\bar y}(\vect x)$ with 
$f(\vect x, \vect{\bar y}(\vect x))\ge \text{NN}^l(\vect x; 
\Theta)$ and 
\begin{equation}\label{eq:bound_different_f}
f(\vect x, \vect{\bar y}(\vect x))- f(\vect x, \vect y_{\text{NN}}^\star(\vect x))\le \Delta,
\end{equation}
by definition of $\Delta$. In this case, we have
\begin{align*}
    & f(\vect x,\vect y_{\text{NN}}^\star(\vect x)) + \lambda \vect s^\star(\vect x, \vect y_{\text{NN}}^\star(\vect x)) - f(\vect x,\vect{\bar y}(\vect x)) - \lambda \vect s^\star(\vect x,\vect{\bar y}(\vect x)) \\ 
    & = f(\vect x,\vect y_{\text{NN}}^\star(\vect x)) + \lambda \vect s^\star(\vect x, \vect y_{\text{NN}}^\star(\vect x)) - f(\vect x,\vect{\bar y}(\vect x)) \\
    & > f(\vect x,\vect y_{\text{NN}}^\star(\vect x)) + \Delta - f(\vect x,\vect{\bar y}(\vect x)) \ge -\Delta + \Delta = 0,
\end{align*}
where the first equality follows since $\vect s^\star(\vect x,\vect{\bar y}(\vect x))=0$, the first inequality follows from \eqref{eq:bound_slack_variable} and the last inequality follows from \eqref{eq:bound_different_f}. In summary, the latter results show that there exists a solution $(\vect x,\vect{\bar y}(\vect x))$ for \eqref{eq:lower_level_approx} which has strictly better objective value than $(\vect x,\vect y_{\text{NN}}^\star(\vect x))$ which is a contradiction.

Note that the inequality $f(\vect x,\vect y_{\text{NN}}^\star(\vect x)) \le \Phi (\vect x)$ follows directly from the definiton of $\Phi(\vect x)$.
\end{proof}
The latter lemma states, that if the neural network is overestimating the follower value for a solution $\vect x\in\mathcal X$, then the surrogate problem \eqref{eq:lower_level_approx} still selects an optimal follower response. However, if the neural network underestimates the value, it may happen that the surrogate problem chooses a follower response for which the objective value either is larger than the true value or differs by at most $\frac{1}{\lambda}\Delta$. Note that the latter term can be controlled by increasing the penalty $\lambda$.

By applying Lemma \ref{lem:approx_guarantee} we can bound the approximation guarantee of the lower-level \method{}. 

\begin{thmn}[\ref{thm:approx_guarantee}]
If the leader and the follower have the same objective function and $\lambda>1$, \method{} returns a feasible solution $(\vect x^\star,\vect y^\star)$ for Problem \eqref{eq:bilevel} with objective value %
\[f(\vect x^\star,\vect y^\star)\le \text{\normalfont opt} + 3\alpha + \frac{2}{\lambda}\Delta,\]
where {\normalfont opt} is the optimal value of \eqref{eq:bilevel} and $\lambda$ the penalty term in \eqref{eq:objective_with_slack} .
\end{thmn}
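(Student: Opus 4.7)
The plan is to reduce the claim, via the bilevel feasibility procedure (which forces $f(\vect x^\star,\vect y^\star)=\Phi(\vect x^\star)$), to showing $\Phi(\vect x^\star)\le\text{opt}+3\alpha+\frac{2}{\lambda}\Delta$. Let $(\vect x^{opt},\vect y^{opt})$ be an optimal bilevel solution with $\Phi(\vect x^{opt})=\text{opt}$, and for fixed $\vect x$ write $V(\vect x):=f(\vect x,\vect y^\star_{\text{NN}}(\vect x))+\lambda\,\vect s^\star(\vect x,\vect y^\star_{\text{NN}}(\vect x))$ for the optimal surrogate value. I would chain three inequalities: (a) an upper bound $V(\vect x^{opt})\le\text{opt}+E_1$; (b) the surrogate optimality $V(\vect x^\star)\le V(\vect x^{opt})$; and (c) a lower bound $V(\vect x^\star)\ge\Phi(\vect x^\star)-E_2$. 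Concatenating these gives $\Phi(\vect x^\star)\le\text{opt}+E_1+E_2$, and the remaining task is to check $E_1+E_2\le 3\alpha+\frac{2}{\lambda}\Delta$.

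For (a), I would plug the surrogate-feasible triple $(\vect x^{opt},\vect y^{opt},s_{opt})$ with $s_{opt}=\max\{0,\text{NN}^l(\vect x^{opt})-\text{opt}\}$ (Observation~\ref{obs:slack_value}) into the surrogate; by~\eqref{eq:approx_error_definition}, $s_{opt}\le\alpha$. Applying Lemma~\ref{lem:approx_guarantee} at $\vect x^{opt}$ then sharpens the bound: in Case~A (where $\text{NN}^l\ge\Phi$) the lemma gives $f(\vect x^{opt},\vect y^\star_{\text{NN}}(\vect x^{opt}))=\text{opt}$, restricting $E_1$ to an $\alpha$-type contribution; in Case~B (where $\text{NN}^l<\Phi$) the lemma's bound $\text{NN}^l-\frac{1}{\lambda}\Delta\le f$ contributes one $\frac{1}{\lambda}\Delta$-term to $E_1$.

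For (c), I would invoke Lemma~\ref{lem:approx_guarantee} at $\vect x^\star$ together with the surrogate constraint $f+s\ge\text{NN}^l$ and~\eqref{eq:approx_error_definition}. In Case~A at $\vect x^\star$, the lemma yields $f(\vect x^\star,\tilde{\vect y})=\Phi(\vect x^\star)$, so immediately $V(\vect x^\star)\ge\Phi(\vect x^\star)$. In Case~B, $V(\vect x^\star)=f+\lambda s\ge f+s\ge\text{NN}^l(\vect x^\star)\ge\Phi(\vect x^\star)-\alpha$ (using $\lambda\ge 1$), and the lemma's buffer $\text{NN}^l-\frac{1}{\lambda}\Delta\le f$ contributes a second $\frac{1}{\lambda}\Delta$-term to $E_2$. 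Chaining (a)--(c) then produces $\Phi(\vect x^\star)\le\text{opt}+E_1+E_2$.

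The main obstacle is the per-case bookkeeping: with Case~A/B at each of $\vect x^{opt}$ and $\vect x^\star$ there are four sub-cases, and $E_1+E_2\le 3\alpha+\frac{2}{\lambda}\Delta$ must hold in each. The most delicate one is Case~A at $\vect x^{opt}$, where a naive estimate $V(\vect x^{opt})\le\text{opt}+\lambda s_{opt}\le\text{opt}+\lambda\alpha$ would introduce an undesired factor of $\lambda$. I would control this by comparing $s_{opt}$ to the slack $\vect s^\star$ already present in the lower bound on $V(\vect x^\star)$, writing $\lambda(s_{opt}-\vect s^\star)$ in terms of the NN approximation errors at $\vect x^{opt}$ and $\vect x^\star$ (each bounded by $\alpha$) so that only $\alpha$-additive terms survive in the final sum. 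The two $\frac{1}{\lambda}\Delta$ contributions arising from the Case~B branches of Lemma~\ref{lem:approx_guarantee} at $\vect x^{opt}$ and $\vect x^\star$ then combine into the $\frac{2}{\lambda}\Delta$ of the theorem.
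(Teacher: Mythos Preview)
Your overall structure—upper-bound the surrogate value at $\vect x^{opt}$, invoke surrogate optimality, lower-bound at $\vect x^\star$, then convert via the bilevel-feasibility step $f(\vect x^\star,\vect y^\star)=\Phi(\vect x^\star)$—is the paper's route. The paper also applies Lemma~\ref{lem:approx_guarantee} at both $\vect x^\star$ and the bilevel optimum (called $\vect x^{\star\star}$ there), but it avoids your four-way case split by using the single consequence $f(\vect x,\vect y^\star_{\text{NN}}(\vect x))\ge\Phi(\vect x)-\alpha-\tfrac1\lambda\Delta$, which holds in both cases of the lemma, to get the analogue of your step~(c) uniformly.

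The genuine gap is exactly where you flagged it. Your proposed fix—``writing $\lambda(s_{opt}-\vect s^\star)$ in terms of the NN approximation errors \dots so that only $\alpha$-additive terms survive''—does not go through. In the AA sub-case, $s_{opt}=\text{NN}^l(\vect x^{opt})-\Phi(\vect x^{opt})$ and $\vect s^\star=\text{NN}^l(\vect x^\star)-\Phi(\vect x^\star)$ are approximation errors at two \emph{unrelated} leader points; each lies in $[0,\alpha]$, but nothing couples them, so $\lambda(s_{opt}-\vect s^\star)$ can genuinely be of order $\lambda\alpha$. Concretely: take $\alpha=1$, $\lambda$ large, two leader points with $\Phi=10$ and $\Phi=50$, and NN outputs $11$ and $50$; then $V(\vect x^{opt})=10+\lambda$ while $V(\vect x')=50$, so the surrogate selects $\vect x^\star=\vect x'$ and $\Phi(\vect x^\star)-\text{opt}=40$, not $O(\alpha)$. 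The per-case bookkeeping you describe therefore cannot absorb the $\lambda$. The paper does not attempt such a cancellation either: at this step it writes the optimality comparison as $f+s^\star\le f'+s'^\star$ with the slack carrying coefficient~$1$ rather than~$\lambda$, and then bounds $s^\star(\vect x^{\star\star},\vect y^{\star\star}_{\text{NN}})\le 2\alpha+\tfrac1\lambda\Delta$ directly via Observation~\ref{obs:slack_value} and the lemma. To align with the paper you would need to drop your $V(\cdot)=f+\lambda s$ framing at precisely that point and work with $f+s^\star$ instead.
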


\begin{proof}
Let $(\vect x^\star_{\text{\normalfont NN}},\vect y^\star_{\text{\normalfont NN}})$ be an optimal solution of the surrogate problem \eqref{eq:lower_level_approx}. By Lemma \ref{lem:approx_guarantee} and by definition \eqref{eq:approx_error_definition} it follows that
\begin{equation}\label{eq:bound_phi}
\begin{aligned}
\Phi (\vect x^\star_{\text{\normalfont NN}}) & \ge f(\vect x^\star_{\text{\normalfont NN}},\vect y^\star_{\text{\normalfont NN}})  \ge \text{\normalfont NN}^l(\vect x^\star_{\text{\normalfont NN}}; \Theta) - \frac{1}{\lambda}\Delta \\ 
&\ge \Phi (\vect x^\star_{\text{\normalfont NN}}) - \alpha - \frac{1}{\lambda}\Delta .
\end{aligned}
\end{equation}

Following the three steps presented in Section \ref{sec:bilevel_feas}, \method{} returns a feasible solution $(\vect x^\star, \vect y^\star)$ for Problem \eqref{eq:vfr} where $\vect x^\star=\vect x^\star_{\text{\normalfont NN}}$ and $f(\vect x^\star,\vect y^\star)=\Phi(\vect x^\star)$.
Hence the following holds: 
\begin{equation}\label{eq:bound_neur_solution}
 f(\vect x^\star,\vect y^\star)  = \Phi(\vect x^\star) \le f(\vect x^\star,\vect y^\star_{\text{\normalfont NN}}) + \alpha + \frac{1}{\lambda}\Delta.
\end{equation}
Assume $(\vect x^{\star\star},\vect y^{\star\star})$ is an optimal bilevel solution of Problem \eqref{eq:bilevel} and $\vect y^{\star\star}_{\text{\normalfont NN}}$ the optimal follower response in the surrogate problem \eqref{eq:lower_level_approx}. Then we have
\[
f(\vect x^\star,\vect y^\star_{\text{\normalfont NN}}) + \vect s^*(\vect x^\star,\vect y^\star_{\text{\normalfont NN}}) \le f(\vect x^{\star\star},\vect y^{\star\star}_{\text{\normalfont NN}}) + \vect s^*(\vect x^{\star\star},\vect y^{\star\star}_{\text{\normalfont NN}})
\]
since $(\vect x^\star_{\text{\normalfont NN}},\vect y^\star_{\text{\normalfont NN}})$ is an optimal solution of \eqref{eq:lower_level_approx} with objective value given by~\eqref{eq:objective_with_slack}. From the latter inequality we obtain
\begin{align*}
f(\vect x^\star,\vect y^\star_{\text{\normalfont NN}}) & \le f(\vect x^{\star\star},\vect \vect y^{\star\star}_{\text{\normalfont NN}}) + \vect s^*(\vect x^{\star\star},\vect y^{\star\star}_{\text{\normalfont NN}}) -  \vect s^*(\vect x^\star,\vect y^\star_{\text{\normalfont NN}}) \\
& \le f(\vect x^{\star\star},\vect y^{\star\star}) + \vect s^*(\vect x^{\star\star},\vect y^{\star\star}_{\text{\normalfont NN}}) \\
& \le f(\vect x^{\star\star},\vect y^{\star\star}) + \text{NN}^l(\vect x^{\star\star}; \Theta) - f(\vect x^{\star\star}, \vect y^{\star\star}_{\text{\normalfont NN}}) \\
& \le f(\vect x^{\star\star},\vect y^{\star\star}) + \Phi(\vect x^{\star\star})+\alpha - (\Phi(\vect x^{\star\star}) - \alpha - \frac{1}{\lambda}\Delta) \\
& = \text{opt} + 2\alpha + \frac{1}{\lambda}\Delta
\end{align*}
where the second inequality follows from $\vect s^*(\vect x^\star,\vect y^\star_{\text{\normalfont NN}})\ge 0$ and $\vect y^{\star\star}$ being an optimal follower solution for $\vect x^{\star\star}$. The third inequality follows from Observation \ref{obs:slack_value} and the fourth inequality follows from \eqref{eq:approx_error_definition} and from \eqref{eq:bound_phi} applied to $\vect x^{\star\star}$.

Together with \eqref{eq:bound_neur_solution}, this completes the proof.
\end{proof}

\section{Problem Formulations}
\label{sec:formulations}
\subsection{Knapsack interdiction}

The bilevel knapsack problem with interdiction constraints as described in \citet{tang2016class} is given by
\[
\begin{aligned}
&\min_{\vect x\in\{0,1\}^n, \vect y} && \sum_{i=1}^{n} p_{i}y_{i} \\
&\qquad \text{s.t.}&& \sum_{i=1}^{n} x_{i} \leq k, \\
&&& \vect y\in
\begin{aligned}[t]
&\argmax_{\vect y^\prime\in\{0,1\}^n} && \sum_{i=1}^{n} p_{i}y_{i}^\prime \\
& \qquad \text{s.t.} && \sum_{i=1}^{n} a_{i}y_{i}^\prime \leq b, \\
&&&y_{i}^\prime + x_{i} \leq 1,  i\in[n],
\end{aligned}
\end{aligned}
\]
where $\vect x$ are the leader's variables and $\vect y$ are that of the follower. The leader decides to interdict (a maximum of $k$) items of the knapsack solved in the follower's problem with $n$ the number of items, $p_i$ the profits, $a_i$ the weight of item $i$, respectively, and the budget of the knapsack is denoted by $b$.

\subsection{Critical node problem}
The critical node problem is described in \citet{carvalho2023integer} as follows
\[
\begin{aligned}
&\max_{\vect x \in \{0,1\}^n, \vect y} && \sum_{i=1}^{n} \Big(p_i^d\big((1-x_i)(1-y_i) + \eta x_i y_i + \epsilon x_i (1-y_i) + \delta(1-x_i)y_i\big) \Big)\\
&\quad \qquad \text{s.t.}&& \sum_{i=1}^{n}d_i x_{i} \leq D, \\
&&& \vect y \in
\begin{aligned}[t]
&\argmax_{\vect y^\prime\in\{0,1\}^n} && \sum_{i=1}^{n} \Big (p_i^a \big (-\gamma (1-x_i)(1-y^\prime_i) + (1-x_i)y^\prime_i + (1-\eta)x_iy_i^\prime \big ) \Big )\\
& \qquad \text{s.t.} && \sum_{i=1}^{n} a_i y_i^\prime\leq A,
\end{aligned}
\end{aligned}
\]
where $\vect x$ and $\vect y$ are the leader's and follower's variables, respectively.  Here, $\vect x$ denotes the decisions of the leader (defender) who selects which nodes to deploy resources to defend a set of nodes, while $\vect y$ are the decisions for the follower (attacker) for which nodes to attack. $d_i$ and $a_i$ are the costs for the $x_i$ and $y_i$, respectively.  $D$ and $A$ are the budgets for the defender and attacker, respectively.  In this problem, the bilinearity arises in the objectives of both the leader and follower, which results in four outcomes for each possible combination of defending and attacking a node $i$.  
The first outcome arises when both the leader and follower do not select the node. In this case, the leader receives the full profit, $p_i^d$, and the follower pays an opportunity cost of $-\gamma p_i^a$ for not attacking an undefended node.  
Second is a successful attack, wherein the leader receives a reduced profit of $\delta p_i^d$ and the follower receives the full profit $p_i^a$.
Third is a mitigated attack, wherein the leader receives a profit of $\eta p_i^d$ for a degradation in operations, while the follower receives a profit of $(1-\eta) p_i^a$ for a mitigated attack.
Fourth is a mitigation without an attack, wherein the leader receives a profit $\epsilon p_i^d$ for a degradation in operations, while the follower receives a profit of 0 for a mitigated attack.

\subsection{Donor-recipient problem} 
The donor-recipient problem as described in \citet{ghatkar2023solution}, and introduced in \citet{morton2018allocation}, is formulated as 
\[
\begin{aligned}
&\max_{\vect x \in [0,1]^n, \vect y, y_0} && \sum_{i=1}^{n} w_{i}y_{i} \\
&\quad \qquad \text{s.t.}&& \sum_{i=1}^{n} c_i x_{i} \leq B_d, \\
&&& (\vect y, y_0) \in
\begin{aligned}[t]
&\argmax_{\vect y^\prime\in\{0,1\}^n, y_0^\prime \in [0, 1]} && \sum_{i=1}^{n} v_{i}y_{i}^\prime + v_0 y_0^\prime\\
& \qquad \qquad \text{s.t.} && \sum_{i=1}^{n} (c_i - c_i x_i)y_{i}^\prime + c_0 y_0^\prime \leq B_r,
\end{aligned}
\end{aligned}
\]
where the leader's decisions $\vect x$ represent those of the donor and the follower's decisions $(\vect y, y_0)$ the ones of the recipient. The profit of project $i$ is given as $w_i$ for the leader and $v_i$ for the follower, the cost as $c_i$, and the budget of the leader, resp. follower, as $B_d$ and $B_r$. Next to the projects, the recipient can allocate its budget to external projects, for which the profit is given as $v_0$ and the cost $c_0$.

\subsection{Discrete network design problem}
We use the standard formulation from~\Cref{sec:introduction} following the computational benchmarking study of~\citet{rey2020computational} and the code provided by the author~\footnote{\url{https://github.com/davidrey123/DNDP/}}.

\section{Comparison to the Learning-Based Approach of~\citet{zhou2024learning}}
\label{app:isnn_results}

This section compares our approach to a recent learning-based approach from \citet{zhou2024learning} based on code provided by the author~\footnote{\url{https://github.com/bozlamberth/LearnBilevel/}}.  We specifically compare the input-supermodular neural network (\isnn{}), i.e., the best-performing model from \citet{zhou2024learning}.  Their approach requires sampling and training for each instance, which is reflected in the time, whereas the model for \mls{} and \mlu{} can be trained once and evaluated across multiple instances, so the data collection and training time are excluded.  We also restrict \isnn{} to run for one iteration given \citet{zhou2024learning} report very minimal improvements when increasing the number of iterations.  Moreover, one iteration requires the least amount of time.  Table~\ref{tab:isnn_kp_results} reports the MRE and time for each method for the knapsack instances from \citet{tang2016class}.  Generally, we can see a significant improvement over \isnn{} in both computing time and MRE.

\begin{table*}[h]\centering\resizebox{0.8\textwidth}{!}{
\begin{tabular}{cc|rr|rr|rr|rr|rr}
\toprule
$n$ & $k$ & \multicolumn{2}{c|}{\isnn} & \multicolumn{2}{c|}{\mls}  & \multicolumn{2}{c|}{\mlu} & \multicolumn{2}{c}{\greedy} & \multicolumn{2}{|c}{\solver} \\
 & & MRE & Time & MRE & Time & MRE\  & Time\  & MRE\ \  & Time\ \  & MRE\ \ \  & Time\ \ \  \\
\midrule
18 & 5 & 10.50 & 254.35 & 1.48 & 0.59 & 1.48 & 0.34 & 1.82 & \textbf{0.14} & \textbf{0.00} & 9.55 \\
18 & 9 & 46.50 & 227.49 & 1.51 & 0.59 & 1.51 & 0.43 & 3.97 & \textbf{0.22} & \textbf{0.00} & 5.81 \\
18 & 14 & 302.10 & 217.62 & \textbf{0.00} & 0.22 & \textbf{0.00} & 0.17 & 64.22 & \textbf{0.03} & \textbf{0.00} & 0.39 \\
20 & 5 & 8.56 & 262.01 & 0.41 & 0.62 & 0.41 & 0.45 & 2.19 & \textbf{0.25} & \textbf{0.00} & 23.18 \\
20 & 10 & 54.07 & 236.74 & 0.99 & 0.66 & 0.99 & 0.58 & 0.99 & \textbf{0.36} & \textbf{0.00} & 10.27 \\
20 & 15 & 447.29 & 229.41 & 3.57 & 0.32 & 3.57 & 0.19 & 23.39 & \textbf{0.02} & \textbf{0.00} & 0.94 \\
22 & 6 & 17.32 & 266.55 & 0.71 & 0.19 & 0.71 & \textbf{0.18} & 0.42 & 0.18 & \textbf{0.00} & 42.30 \\
22 & 11 & 66.24 & 247.97 & 1.01 & 0.28 & 1.01 & \textbf{0.28} & 1.08 & 0.33 & \textbf{0.00} & 16.26 \\
22 & 17 & 485.75 & 241.61 & 14.43 & 0.24 & 14.43 & 0.15 & 14.43 & \textbf{0.13} & \textbf{0.00} & 0.68 \\
25 & 7 & 14.75 & 280.71 & 0.44 & 2.66 & 0.44 & 2.42 & 0.44 & \textbf{0.64} & \textbf{0.00} & 137.96 \\
25 & 13 & 61.57 & 264.09 & 1.42 & 2.75 & 1.42 & 2.79 & 3.85 & \textbf{1.24} & \textbf{0.00} & 48.43 \\
25 & 19 & 424.92 & 262.04 & 2.49 & 0.48 & 2.49 & 0.38 & 2.49 & \textbf{0.13} & \textbf{0.00} & 1.77 \\
28 & 7 & 19.17 & 297.44 & 0.39 & 0.67 & 0.39 & 0.74 & 0.26 & \textbf{0.62} & \textbf{0.00} & 309.18 \\
28 & 14 & 73.61 & 286.02 & 0.75 & 2.10 & 0.75 & 1.45 & 1.37 & \textbf{1.29} & \textbf{0.00} & 120.74 \\
28 & 21 & 423.15 & 279.51 & 1.14 & 0.45 & 1.14 & 0.49 & 3.16 & \textbf{0.31} & \textbf{0.00} & 4.92 \\
30 & 8 & 21.01 & 305.67 & \textbf{0.00} & 1.54 & \textbf{0.00} & 1.54 & 0.43 & \textbf{0.97} & \textbf{0.00} & 792.44 \\
30 & 15 & 68.19 & 295.92 & 0.49 & 3.64 & 0.49 & 3.06 & 0.75 & \textbf{1.35} & \textbf{0.00} & 187.23 \\
30 & 23 & 416.03 & 290.94 & 2.29 & 1.08 & 2.29 & 0.73 & 4.48 & \textbf{0.25} & \textbf{0.00} & 5.65 \\
\multicolumn{2}{c|}{Average \cite{tang2016class}} & 164.49 & 263.67 & 1.86 & 1.06 & 1.86 & 0.91 & 7.21 & \textbf{0.47} & \textbf{0.00} & 95.43 \\
\bottomrule
\end{tabular}}
\caption{Comparison to \isnn{} from \citet{zhou2024learning} on the knapsack interdiction problem. $n$ and $k$ denote the number of items and the interdiction budget, respectively.  We directly evaluate on the 180 instances (10 per size) of \citet{tang2016class}; each value is the average over 10 instances.  
We compare the upper- and lower-level approximations, as well as the no-learning baseline (\greedy{}) and the exact algorithm (\solver{}).}
\label{tab:isnn_kp_results}
\end{table*}

\section{Objective \& Incumbent Results}
\label{app:obj_results}

This section reports the more detailed information related to the objective values for each problem.  Objective results for each problem are given in Tables~\ref{tab:kp_obj_results}-\ref{tab:dndp_obj_results}.  In addition, for KIP and CNP, as the solver from \citet{fischetti2016intersection} provides easily accessible incumbent solutions, we include two additional metrics.  
\begin{itemize}
    \item[--] 
    The first metric ``Solver Time Ratio" measures the time it takes the solver to obtain an equally good (or better) incumbent solution, divided by the solving time of the respective approximation. The number in brackets to the right indicates the number of instances for which the solver finds an equivalent solution.
    \item[--] 
    The second metric ``Solver Relative Error at Time" measures the relative error of the best solution found by the solver compared to the respective approximation. The value in brackets to the right indicates the number of instances for which the solver finds an incumbent before the approximation is done solving.
\end{itemize}

\begin{table*}[htbp!]\centering\resizebox{1.0\textwidth}{!}{
\begin{tabular}{cc|rrrr|rrrr|rrrr|rrr|rrr}
\toprule
$n$ & $k$ & \multicolumn{4}{c|}{Objective}  &  \multicolumn{4}{c|}{Mean Relative Error (\%)}  & \multicolumn{4}{c|}{Solving Time} & \multicolumn{3}{c|}{Solver Time Ratio} & \multicolumn{3}{c}{Solver Relative Error at Time}\\
 & & \mls & \mlu & \greedy & \solver & \mls\  & \mlu\  & \greedy\  & \solver\  & \mls\ \  & \mlu\ \  & \greedy\ \  & \solver\ \  & \mls\ \ \  & \mlu\ \ \  & \greedy\ \ \  & \mls\ \ \ \  & \mlu\ \ \ \  & \greedy\ \ \ \  \\
\midrule
18 & 5 & 308.30 & 308.30 & 309.20 & \textbf{303.50} & 1.48 & 1.48 & 1.82 & \textbf{0.00} & 0.59 & 0.34 & \textbf{0.14} & 9.55 & 24.48  (10) & 35.86  (10) & 177.39  (10) & -  (0) & -  (0) & -  (0) \\
18 & 9 & 145.60 & 145.60 & 149.10 & \textbf{143.40} & 1.51 & 1.51 & 3.97 & \textbf{0.00} & 0.59 & 0.43 & \textbf{0.22} & 5.81 & 12.31  (10) & 18.52  (10) & 73.49  (10) & -  (0) & -  (0) & -  (0) \\
18 & 14 & \textbf{31.00} & \textbf{31.00} & 51.40 & \textbf{31.00} & \textbf{0.00} & \textbf{0.00} & 64.22 & \textbf{0.00} & 0.22 & 0.17 & \textbf{0.03} & 0.39 & 1.87  (10) & 2.86  (10) & 31.9  (10) & 44.0  (2) & 41.5  (2) & -  (0) \\
20 & 5 & 390.30 & 390.30 & 397.60 & \textbf{388.50} & 0.41 & 0.41 & 2.19 & \textbf{0.00} & 0.62 & 0.45 & \textbf{0.25} & 23.18 & 50.68  (10) & 65.56  (10) & 420.04  (10) & -  (0) & -  (0) & -  (0) \\
20 & 10 & 165.40 & 165.40 & 165.40 & \textbf{163.70} & 0.99 & 0.99 & 0.99 & \textbf{0.00} & 0.66 & 0.58 & \textbf{0.36} & 10.27 & 18.39  (10) & 22.03  (10) & 80.0  (10) & -  (0) & -  (0) & -  (0) \\
20 & 15 & 33.40 & 33.40 & 41.90 & \textbf{31.40} & 3.57 & 3.57 & 23.39 & \textbf{0.00} & 0.32 & 0.19 & \textbf{0.02} & 0.94 & 2.82  (10) & 4.75  (10) & 54.29  (10) & -  (0) & -  (0) & -  (0) \\
22 & 6 & 385.50 & 385.50 & 384.30 & \textbf{382.70} & 0.71 & 0.71 & 0.42 & \textbf{0.00} & 0.19 & \textbf{0.18} & 0.18 & 42.30 & 228.97  (10) & 249.8  (10) & 714.31  (10) & -  (0) & -  (0) & -  (0) \\
22 & 11 & 163.20 & 163.20 & 163.30 & \textbf{161.00} & 1.01 & 1.01 & 1.08 & \textbf{0.00} & 0.28 & \textbf{0.28} & 0.33 & 16.26 & 69.05  (10) & 74.99  (10) & 129.04  (10) & -  (0) & -  (0) & -  (0) \\
22 & 17 & 35.20 & 35.20 & 35.20 & \textbf{29.20} & 14.43 & 14.43 & 14.43 & \textbf{0.00} & 0.24 & 0.15 & \textbf{0.13} & 0.68 & 3.09  (10) & 5.48  (10) & 29.34  (10) & 18.0  (1) & -  (0) & -  (0) \\
25 & 7 & 438.20 & 438.20 & 438.20 & \textbf{436.20} & 0.44 & 0.44 & 0.44 & \textbf{0.00} & 2.66 & 2.42 & \textbf{0.64} & 137.96 & 58.27  (10) & 61.24  (10) & 1102.38  (10) & 28.69  (2) & 28.69  (2) & 28.69  (2) \\
25 & 13 & 194.90 & 194.90 & 199.90 & \textbf{191.50} & 1.42 & 1.42 & 3.85 & \textbf{0.00} & 2.75 & 2.79 & \textbf{1.24} & 48.43 & 21.14  (10) & 25.13  (10) & 67.86  (10) & -  (0) & -  (0) & -  (0) \\
25 & 19 & 43.30 & 43.30 & 43.30 & \textbf{41.80} & 2.49 & 2.49 & 2.49 & \textbf{0.00} & 0.48 & 0.38 & \textbf{0.13} & 1.77 & 3.98  (10) & 4.81  (10) & 38.29  (10) & -  (0) & -  (0) & -  (0) \\
28 & 7 & 518.30 & 518.30 & 517.60 & \textbf{516.10} & 0.39 & 0.39 & 0.26 & \textbf{0.00} & 0.67 & 0.74 & \textbf{0.62} & 309.18 & 671.57  (10) & 518.35  (10) & 1033.45  (10) & 29.28  (8) & 29.28  (8) & 29.48  (8) \\
28 & 14 & 224.90 & 224.90 & 226.80 & \textbf{223.40} & 0.75 & 0.75 & 1.37 & \textbf{0.00} & 2.10 & 1.45 & \textbf{1.29} & 120.74 & 59.99  (10) & 84.36  (10) & 120.05  (10) & 19.84  (2) & 19.84  (2) & 16.14  (2) \\
28 & 21 & 46.70 & 46.70 & 48.20 & \textbf{46.20} & 1.14 & 1.14 & 3.16 & \textbf{0.00} & 0.45 & 0.49 & \textbf{0.31} & 4.92 & 12.95  (10) & 11.66  (10) & 38.98  (10) & -  (0) & -  (0) & -  (0) \\
30 & 8 & \textbf{536.30} & \textbf{536.30} & 538.70 & \textbf{536.30} & \textbf{0.00} & \textbf{0.00} & 0.43 & \textbf{0.00} & 1.54 & 1.54 & \textbf{0.97} & 792.44 & 497.06  (10) & 455.29  (10) & 1924.47  (10) & 27.07  (10) & 27.07  (10) & 26.58  (10) \\
30 & 15 & 231.20 & 231.20 & 231.90 & \textbf{230.00} & 0.49 & 0.49 & 0.75 & \textbf{0.00} & 3.64 & 3.06 & \textbf{1.35} & 187.23 & 56.14  (10) & 66.07  (10) & 254.88  (10) & 86.11  (6) & 86.11  (6) & 85.19  (6) \\
30 & 23 & 49.00 & 49.00 & 50.40 & \textbf{47.50} & 2.29 & 2.29 & 4.48 & \textbf{0.00} & 1.08 & 0.73 & \textbf{0.25} & 5.65 & 7.5  (10) & 8.79  (10) & 48.27  (10) & -  (0) & -  (0) & -  (0) \\
100 & 25 & 2,164.71 & 2,164.69 & \textbf{2,145.07} & 2,318.99 & 0.93 & 0.93 & \textbf{0.00} & 8.09 & 10.02 & 8.40 & \textbf{4.19} & 3,600.40 & -  (0) & -  (0) & -  (0) & 34.36  (100) & 34.99  (100) & 37.93  (100) \\
100 & 50 & 965.37 & 965.37 & \textbf{956.76} & 1,043.71 & 0.96 & 0.96 & \textbf{0.04} & 8.96 & 51.68 & \textbf{49.28} & 53.74 & 3,600.44 & 23.56  (5) & 26.24  (5) & -  (0) & 59.36  (100) & 60.81  (100) & 60.48  (100) \\
100 & 75 & \textbf{245.01} & \textbf{245.01} & 245.08 & 259.95 & \textbf{0.08} & \textbf{0.08} & 0.12 & 5.87 & 24.69 & \textbf{23.78} & 35.27 & 3,600.52 & 133.35  (4) & 152.72  (4) & 138.07  (5) & 177.01  (100) & 196.86  (100) & 193.94  (100) \\
\bottomrule
\end{tabular}}
\caption{KIP objective and incumbent results. Each row averaged over 10 instances, except for $n=100$, which is average over 100 instances.  \mls{} and \mlu{} specify the lower- and upper-level approximations respectively.  All times in seconds.  }
\label{tab:kp_obj_results}
\end{table*}

\begin{table*}[htbp!]\centering\resizebox{0.8\textwidth}{!}{
\begin{tabular}{l|rrr|rrr|rrr|rr|rr}
\toprule
$|V|$ & \multicolumn{3}{c|}{Objective}  &  \multicolumn{3}{c|}{Mean Relative Error (\%)}  & \multicolumn{3}{c|}{Times} & \multicolumn{2}{c|}{Solver Time Ratio} & \multicolumn{2}{c}{Solver Relative Error at Time} \\
 & \mls & \mlu & \solver & \mls\  & \mlu\  & \solver\  & \mls\ \  & \mlu\ \  & \solver\ \  & \mls\ \ \  & \mlu\ \ \  & \mls\ \ \ \  & \mlu\ \ \ \  \\
\midrule
10 & 224.47 & 225.10 & \textbf{228.63} & 3.20 & 2.75 & \textbf{1.01} & 1.69 & \textbf{1.19} & 3,600.80 & 136.66  (288) & 191.34  (289) & 269.0  (1) & -  (0) \\
25 & 562.72 & 566.23 & \textbf{572.51} & 2.60 & 1.77 & \textbf{0.73} & 1.69 & \textbf{1.19} & 3,600.80 & 736.84  (275) & 3934.02  (271) & 2.22  (248) & 2.57  (124) \\
50 & 1,139.27 & 1,143.95 & \textbf{1,148.17} & 1.42 & 0.98 & \textbf{0.67} & 1.69 & \textbf{1.19} & 3,600.80 & 718.74  (225) & 3840.41  (183) & 1.94  (295) & 3.17  (190) \\
100 & 2,285.15 & \textbf{2,297.47} & 2,272.30 & 1.12 & \textbf{0.56} & 1.79 & 1.69 & \textbf{1.19} & 3,600.80 & 645.37  (131) & 926.6  (90) & 2.4  (283) & 2.96  (278) \\
300 & 6,781.91 & \textbf{6,882.42} & 6,755.07 & 2.01 & \textbf{0.33} & 2.32 & 1.69 & \textbf{1.19} & 3,600.80 & 41.65  (166) & 167.38  (47) & 1.49  (245) & 2.65  (243) \\
500 & 11,348.60 & \textbf{11,439.25} & 11,208.43 & 1.33 & \textbf{0.45} & 2.47 & 1.69 & \textbf{1.19} & 3,600.80 & 106.9  (83) & 99.48  (15) & 1.51  (206) & 2.45  (205) \\
\bottomrule
\end{tabular}}
\caption{CNP objective and incumbent results. Each row averaged over 300 instances.  All times in seconds.  }
\label{tab:cng_obj_results}
\end{table*}

\begin{table*}[htbp!]\centering\resizebox{0.6\textwidth}{!}{
\begin{tabular}{l|rrr|rrr|rrr}
\toprule
Instance \# & \multicolumn{3}{c|}{Objective}  &  \multicolumn{3}{c|}{Relative Error (\%)}  & \multicolumn{3}{c}{Times} \\
 & \mls & \mlu & \drbaseline & \mls\  & \mlu\  & \drbaseline\  & \mls\ \  & \mlu\ \  & \drbaseline\ \  \\
\midrule
1 & 34,356.00 & \textbf{59,524.00} & 47,206.00 & 42.28 & \textbf{0.00} & 20.69 & \textbf{0.09} & 1.44 & 3,600.09 \\
2 & 33,713.00 & \textbf{54,764.00} & 39,526.00 & 38.44 & \textbf{0.00} & 27.82 & \textbf{0.12} & 1.52 & 3,600.08 \\
3 & 36,717.00 & \textbf{66,967.00} & 46,792.00 & 45.17 & \textbf{0.00} & 30.13 & \textbf{0.14} & 2.85 & 3,600.07 \\
4 & 36,414.00 & \textbf{54,908.00} & 44,486.00 & 33.68 & \textbf{0.00} & 18.98 & \textbf{0.07} & 1.68 & 3,637.23 \\
5 & 33,090.00 & \textbf{59,627.00} & 43,355.00 & 44.51 & \textbf{0.00} & 27.29 & \textbf{0.10} & 1.96 & 3,600.07 \\
6 & 36,691.00 & \textbf{56,603.00} & 39,006.00 & 35.18 & \textbf{0.00} & 31.09 & \textbf{0.08} & 2.93 & 3,600.10 \\
7 & 31,354.00 & \textbf{55,569.00} & 43,443.00 & 43.58 & \textbf{0.00} & 21.82 & \textbf{0.09} & 1.58 & 3,600.14 \\
8 & 35,710.00 & \textbf{54,414.00} & 39,839.00 & 34.37 & \textbf{0.00} & 26.79 & \textbf{0.09} & 0.87 & 3,600.10 \\
9 & 38,961.00 & \textbf{61,869.00} & 45,288.00 & 37.03 & \textbf{0.00} & 26.80 & \textbf{0.16} & 4.55 & 3,600.16 \\
10 & 36,965.00 & \textbf{60,488.00} & 43,194.00 & 38.89 & \textbf{0.00} & 28.59 & \textbf{0.12} & 3.57 & 3,600.10 \\
\midrule
Averaged & 35,397.10 & \textbf{58,473.30} & 43,213.50 & 39.31 & \textbf{0.00} & 26.00 & \textbf{0.11} & 2.30 & 3,603.82 \\
\bottomrule
\end{tabular}}
\caption{DRP objective results. Each row corresponds to a single instance from dataset 15, i.e., the most challenging instances from \citet{ghatkar2023solution}.  All times in seconds. }
\label{tab:drp_obj_results}
\end{table*}

\begin{table*}[htbp!]\centering\resizebox{1.0\textwidth}{!}{
\begin{tabular}{l|l|rrrrrrr|rrrrrrr|rrrr}
\toprule
\# of edges & budget & \multicolumn{7}{c|}{Objective}  &  \multicolumn{7}{c|}{Relative Error (\%)}  & \multicolumn{4}{c}{Times} \\
 & & \mls & \mlu & \gbts & \gbtu & \mkkt{-5} & \mkkt{-10} & \mkkt{-30} & \mls\ \  & \mlu\ \  & \gbts\ \  & \gbtu\ \  & \mkkt{-5}\ \  & \mkkt{-10}\ \  & \mkkt{-30}\ \  & \mls\  & \mlu\  & \gbts\  & \gbtu\  \\
\midrule
10 & 0.25 & 6,181.42 & 6,422.90 & 6,206.74 & 6,194.40 & 6,502.62 & 6,155.69 & \textbf{6,129.65} & 0.88 & 4.97 & 1.21 & 1.11 & 6.08 & 0.51 & \textbf{0.10} & 2.89 & \textbf{0.01} & 4.02 & 0.04 \\
10 & 0.5 & 5,495.55 & 5,706.80 & 5,497.17 & 5,691.26 & 5,901.07 & 5,618.41 & \textbf{5,492.23} & 0.06 & 3.93 & 0.09 & 3.70 & 7.39 & 2.17 & \textbf{0.00} & 3.20 & \textbf{0.01} & 3.68 & 0.05 \\
10 & 0.75 & 5,185.31 & 5,254.89 & 5,191.29 & 5,279.07 & 5,481.49 & \textbf{5,180.90} & 5,181.30 & 0.13 & 1.49 & 0.24 & 2.00 & 5.88 & \textbf{0.05} & 0.06 & 2.15 & \textbf{0.01} & 2.21 & 0.03 \\
20 & 0.25 & \textbf{5,207.84} & 5,554.08 & 5,263.34 & 5,415.21 & 5,842.68 & 5,539.32 & 5,215.02 & \textbf{1.16} & 7.91 & 2.17 & 5.21 & 13.52 & 6.84 & 1.33 & 5.01 & \textbf{0.04} & 5.01 & 0.26 \\
20 & 0.5 & 4,371.18 & 4,491.25 & 4,352.48 & 4,418.78 & 5,006.37 & 4,752.99 & \textbf{4,342.83} & 1.45 & 4.30 & 1.02 & 2.65 & 16.39 & 9.02 & \textbf{0.84} & 5.01 & \textbf{0.05} & 5.00 & 0.15 \\
20 & 0.75 & 4,049.98 & 4,239.73 & \textbf{4,047.25} & 4,078.57 & 4,484.67 & 4,226.19 & 4,047.56 & 0.14 & 4.87 & \textbf{0.08} & 0.84 & 11.02 & 4.07 & 0.08 & 2.48 & \textbf{0.01} & 3.53 & 0.10 \\
\bottomrule
\end{tabular}}
\caption{DNDP objective results.  Each is averaged across 10 instances.  All times in seconds.}
\label{tab:dndp_obj_results}
\end{table*}

\section{Distributional Results for Relative Error}
\label{app:boxplots}

\begin{figure}[ht]
\begin{center}
\centerline{\includegraphics[width=\columnwidth]{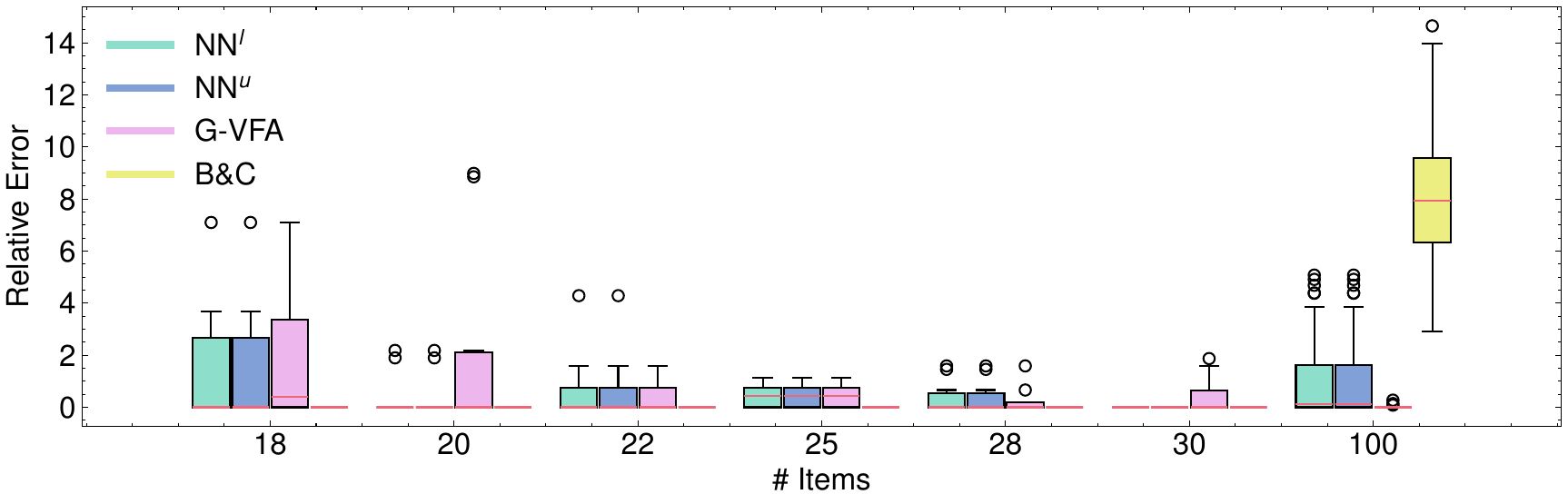}}
    \caption{Box plot of relative errors for KIP with interdiction budget of $k=n/4$.}
    \label{fig:bp_kp_25}
\end{center}
\end{figure}

\begin{figure}[ht]
\begin{center}
\centerline{\includegraphics[width=\columnwidth]{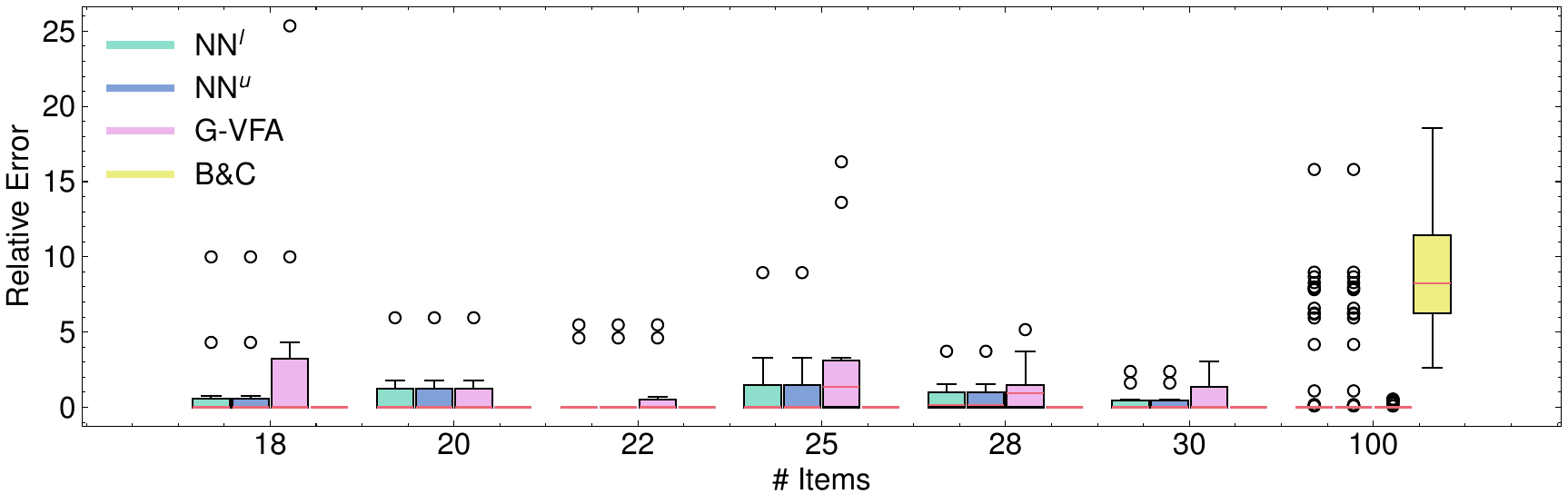}}
    \caption{Box plot of relative errors for KIP with interdiction budget of $k=n/2$.}
    \label{fig:bp_kp_50}
\end{center}
\end{figure}

\begin{figure}[ht]
\begin{center}
\centerline{\includegraphics[width=\columnwidth]{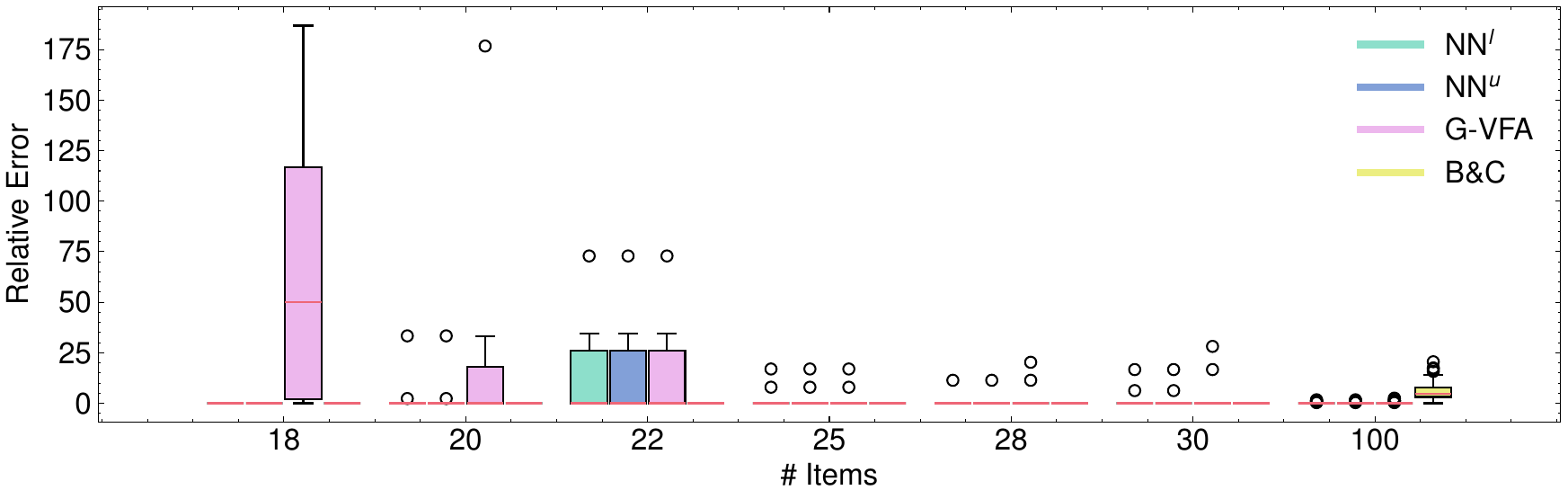}}
    \caption{Box plot of relative errors for KIP with interdiction budget of $k=3n/4$.}
    \label{fig:bp_kp_75}
\end{center}
\end{figure}

\begin{figure}[ht]
\begin{center}
\centerline{\includegraphics[width=\columnwidth]{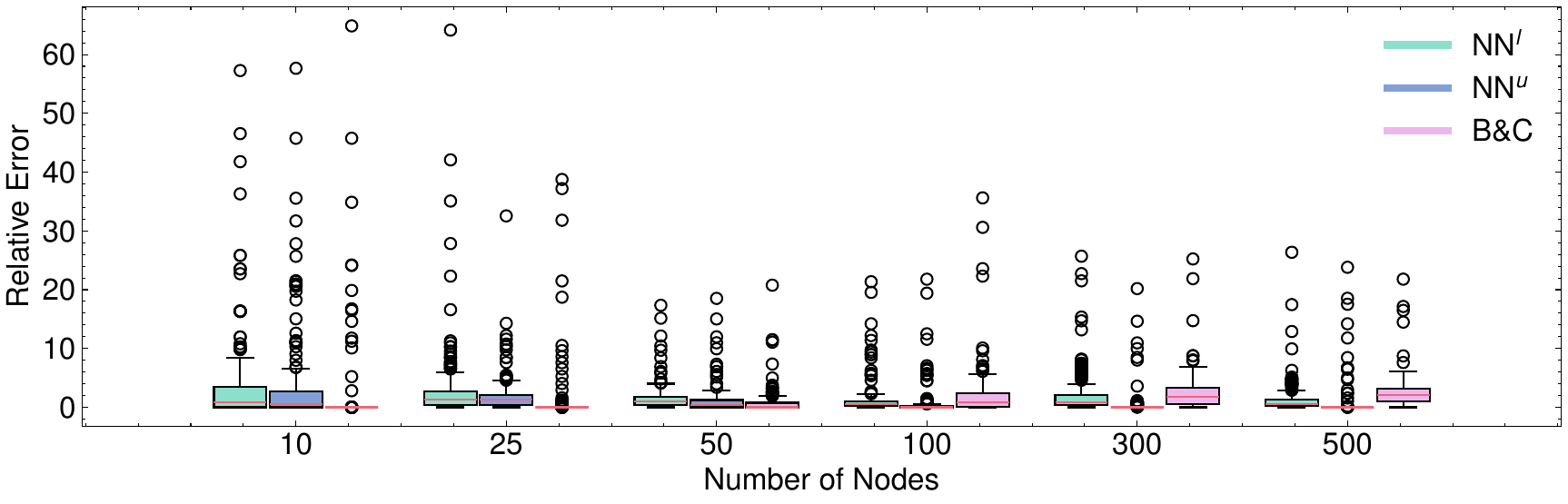}}
    \caption{Box plot of relative errors for CNP.  \solver{} does not find any upper-level solutions for 2 of the 300 instances of size $|V| = 500$, so these are excluded from the plot. }
    \label{fig:bp_cng}
\end{center}
\end{figure}

\begin{figure}[ht]
\begin{center}
\centerline{\includegraphics[width=\columnwidth]{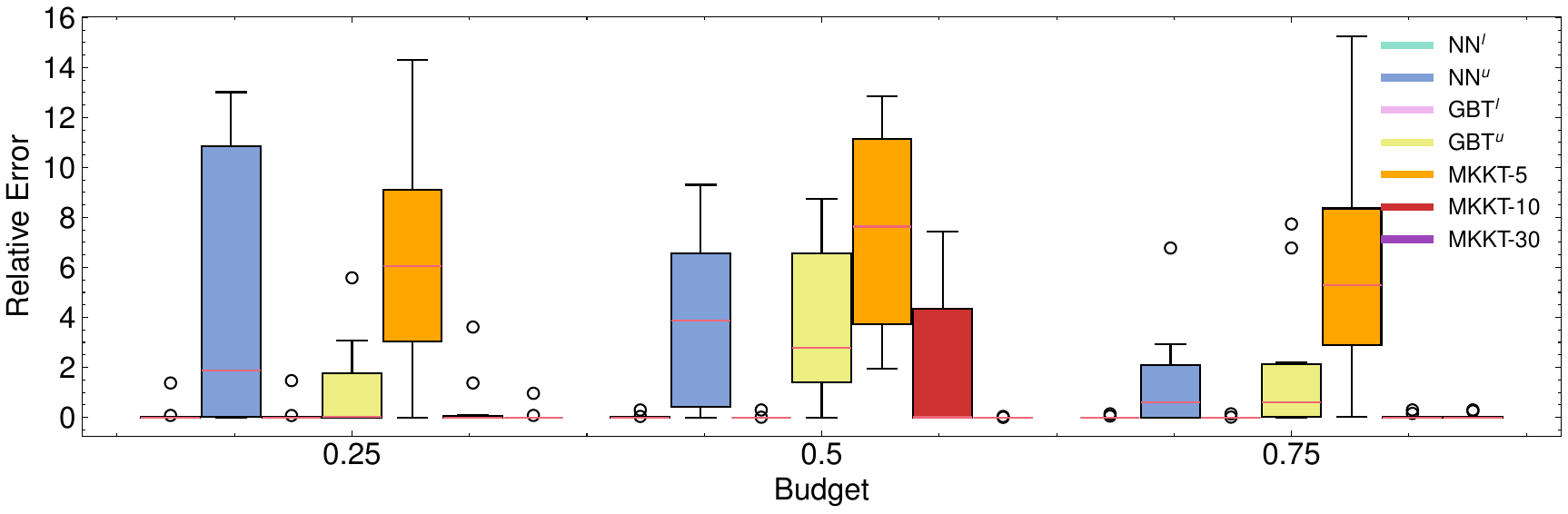}}
    \caption{Box plot of relative errors for DNDP with 10 edges.  \mkkt-\{5,10,30\} corresponds to \mkkt{} run with each respective time limit. }
    \label{fig:bp_dndp_10}
\end{center}
\end{figure}

\begin{figure}[ht]
\begin{center}
\centerline{\includegraphics[width=\columnwidth]{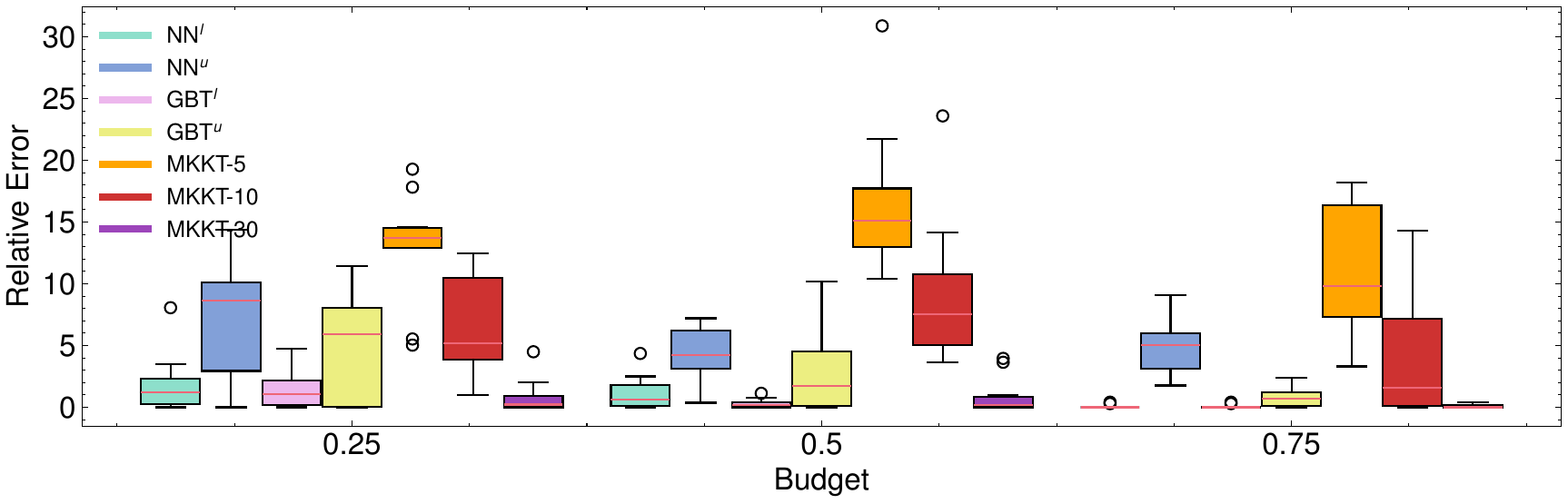}}
    \caption{Box plot of relative errors for DNDP with 20 edges.  \mkkt-\{5,10,30\} corresponds to \mkkt{} run with each respective time limit.}
    \label{fig:bp_dndp_20}
\end{center}
\end{figure}

\section{Ablation}
\label{app:ab}
\subsection{Lower-level value function constraints}
\label{app:ab_l_vf}
In this section, we present an ablation study comparing alternative types of value function approximation (VFA) for the lower-level approximation on the KIP. Namely, we compare the approach used the the main paper, \mls{}, which utilizes a slack variable to ensure feasibility.  In addition, we include \mln{} which does not use a slack at all, and \mld{}, which uses the largest error in the validation set to scale the prediction down.  Table~\ref{tab:kp_lower_appendix} reports objectives, relative errors, and solving times of each method.  In general, the solution quality of \mls{} slightly exceeds that of \mld{}, while \mln{} does significantly worse.  The latter results is unsurprising given that any underestimation will cause a loss of feasibility for potentially high quality upper-level decisions. \mls{} is additionally generally the fastest to optimize as well.  

\begin{table*}[htbp!]\centering\resizebox{0.6\textwidth}{!}{
\begin{tabular}{cc|rrr|rrr|rrr}
\toprule
$n$ & $k$ & \multicolumn{3}{c|}{Objective}  &  \multicolumn{3}{c|}{Mean Relative Error (\%)}  & \multicolumn{3}{c}{Times} \\
 & & \mls & \mld & \mln & \mls\  & \mld\  & \mln\  & \mls\ \  & \mld\ \  & \mln\ \  \\
\midrule
18 & 5 & \textbf{308.30} & 308.40 & 318.40 & \textbf{0.00} & 0.03 & 3.28 & \textbf{0.59} & 0.83 & 1.06 \\
18 & 9 & \textbf{145.60} & \textbf{145.60} & 152.90 & \textbf{0.00} & \textbf{0.00} & 6.70 & \textbf{0.59} & 1.21 & 0.81 \\
18 & 14 & \textbf{31.00} & 37.50 & 40.00 & \textbf{0.00} & 16.91 & 48.23 & \textbf{0.22} & 0.32 & 0.35 \\
20 & 5 & \textbf{390.30} & \textbf{390.30} & 413.90 & \textbf{0.00} & \textbf{0.00} & 6.48 & \textbf{0.62} & 0.79 & 1.38 \\
20 & 10 & \textbf{165.40} & \textbf{165.40} & 175.90 & \textbf{0.00} & \textbf{0.00} & 6.60 & \textbf{0.66} & 1.47 & 1.76 \\
20 & 15 & 33.40 & \textbf{32.50} & 55.70 & \textbf{3.33} & 14.29 & 100.91 & \textbf{0.32} & 0.38 & 0.96 \\
22 & 6 & \textbf{385.50} & 386.80 & 403.00 & \textbf{0.00} & 0.27 & 4.56 & \textbf{0.19} & 0.37 & 0.80 \\
22 & 11 & 163.20 & \textbf{162.10} & 179.20 & 0.55 & \textbf{0.07} & 11.83 & \textbf{0.28} & 0.85 & 1.23 \\
22 & 17 & \textbf{35.20} & \textbf{35.20} & 49.00 & 5.15 & \textbf{4.63} & 69.91 & 0.24 & \textbf{0.19} & 0.41 \\
25 & 7 & \textbf{438.20} & \textbf{438.20} & 446.50 & \textbf{0.00} & \textbf{0.00} & 1.98 & 2.66 & \textbf{2.40} & 3.85 \\
25 & 13 & \textbf{194.90} & 195.50 & 206.50 & \textbf{0.00} & 0.26 & 6.67 & \textbf{2.75} & 3.25 & 4.83 \\
25 & 19 & \textbf{43.30} & \textbf{43.30} & 64.40 & \textbf{1.69} & \textbf{1.69} & 92.49 & \textbf{0.48} & 0.74 & 1.84 \\
28 & 7 & \textbf{518.30} & \textbf{518.30} & 532.20 & \textbf{0.00} & \textbf{0.00} & 2.80 & \textbf{0.67} & 0.83 & 2.37 \\
28 & 14 & \textbf{224.90} & \textbf{224.90} & 234.70 & \textbf{0.00} & \textbf{0.00} & 4.60 & \textbf{2.10} & 2.69 & 3.72 \\
28 & 21 & \textbf{46.70} & 49.90 & 60.70 & \textbf{0.00} & 7.48 & 37.45 & \textbf{0.45} & 0.83 & 1.67 \\
30 & 8 & \textbf{536.30} & 537.10 & 537.70 & \textbf{0.00} & 0.18 & 0.25 & \textbf{1.54} & 1.86 & 3.07 \\
30 & 15 & \textbf{231.20} & \textbf{231.20} & 232.80 & \textbf{0.16} & \textbf{0.16} & 0.82 & \textbf{3.64} & 4.18 & 5.03 \\
30 & 23 & \textbf{49.00} & 50.70 & 51.90 & \textbf{0.00} & 2.79 & 4.48 & \textbf{1.08} & 1.50 & 1.76 \\
100 & 25 & 2,164.71 & \textbf{2,164.13} & 2,168.52 & 0.04 & \textbf{0.01} & 0.22 & \textbf{10.02} & 12.28 & 19.81 \\
100 & 50 & 965.37 & \textbf{965.26} & 974.28 & 0.03 & \textbf{0.02} & 1.01 & \textbf{51.68} & 61.09 & 72.86 \\
100 & 75 & \textbf{245.01} & 245.10 & 262.66 & \textbf{0.04} & 0.08 & 8.18 & \textbf{24.69} & 30.48 & 81.30 \\
\bottomrule
\end{tabular}}
\caption{KIP results comparing \mls{}, \mld{}, and \mln{}. Each row is an average over 10 instances, except for $n=100$, which is an average over 100 instances.  All times in seconds.  }\label{tab:kp_lower_appendix}
\end{table*}

\subsection{The effect of $\lambda$}
In this section, we present results with $\lambda =0.1$ for DNDP.  Table~\ref{tab:dndp_results_lambda} presents relative error and solving times for this setting.  Notably, this choice of $\lambda$ tends to provide higher quality solutions than $\lambda=1$, as reported in the main paper in Table~\ref{tab:results}.  Tuning this hyperparameter further can thus improve the already strong numerical results reported for DNDP, and possibly other problems.  

\label{app:ab_lambda}
\begin{table*}[htbp!]\centering\resizebox{0.8\textwidth}{!}{
\begin{tabular}{cc|rr|rr|rr|rr|rrr}
\toprule
\# of edges & budget & \multicolumn{2}{c|}{\mls}  & \multicolumn{2}{c|}{\mlu} & \multicolumn{2}{c|}{\gbts} & \multicolumn{2}{c|}{\gbtu} &  \multicolumn{3}{c}{\mkkt} \\\ 
 & & MRE & Time & MRE\  & Time\  & MRE\ \  & Time\ \  & MRE\ \ \  & Time\ \ \  & MRE-5 & MRE-10 & MRE-30 \\
\midrule
10 & 0.25 & 0.15 & 2.66 & 4.97 & \textbf{0.01} & 0.16 & 3.38 & 1.11 & 0.04 & 6.08 & 0.51 & \textbf{0.10} \\
10 & 0.5 & 0.03 & 2.43 & 3.93 & \textbf{0.01} & 0.03 & 3.46 & 3.70 & 0.05 & 7.39 & 2.17 & \textbf{0.00} \\
10 & 0.75 & 0.02 & 1.61 & 1.49 & \textbf{0.01} & \textbf{0.01} & 1.67 & 1.99 & 0.03 & 5.87 & 0.05 & 0.06 \\
20 & 0.25 & 1.85 & 5.01 & 7.42 & \textbf{0.04} & 1.48 & 5.00 & 4.70 & 0.26 & 12.98 & 6.44 & \textbf{0.86} \\
20 & 0.5 & 1.20 & 5.01 & 4.40 & \textbf{0.05} & \textbf{0.30} & 5.01 & 2.75 & 0.15 & 16.50 & 9.11 & 0.94 \\
20 & 0.75 & \textbf{0.07} & 2.23 & 4.87 & \textbf{0.01} & 0.07 & 2.79 & 0.84 & 0.10 & 11.02 & 4.07 & 0.08 \\
\midrule
\multicolumn{2}{c|}{Average} & 0.55 & 3.16 & 4.51 & \textbf{0.02} & 0.34 & 3.55 & 2.51 & 0.11 & 9.97 & 3.72 & \textbf{0.34} \\
\bottomrule
\end{tabular}}
\caption{DNDP results for $\lambda=0.1$. Each is averaged across 10 instances.  \mls{} and \gbts{} are the learning-based formulations with slack for the lower-level approximation.  \mlu{} and \gbtu{} are the learning-based formulations for the upper-level approximation. }
\label{tab:dndp_results_lambda}
\end{table*}

\subsection{Greedy features for Knapsack}
\label{app:ab_greedy}
This section explores the impact of the use of greedy features on the KIP problem.  We specifically compare a model trained purely on the coefficients to a model trained on the coefficients with additional features derived from KIP-specific greedy heuristics.  From Table~\ref{tab:kp_greedy_appendix}, there is a clear advantage with the greedy features in terms of solution quality at the cost of increased solving time.

\begin{table*}[htbp!]\centering\resizebox{0.6\textwidth}{!}{
\begin{tabular}{cc|rr|rr|rr}
\toprule
$n$ & $k$ & \multicolumn{2}{c|}{Objective}  &  \multicolumn{2}{c|}{Mean Relative Error (\%)}  & \multicolumn{2}{c}{Times} \\
 & & \mls{} greedy & \mls{} no greedy & \mls{} greedy\  & \mls{} no greedy\  & \mls{} greedy\ \  & \mls{} no greedy\ \  \\
\midrule
18 & 5 & \textbf{308.30} & 314.90 & \textbf{0.85} & 2.93 & 0.59 & \textbf{0.06} \\
18 & 9 & \textbf{145.60} & 150.50 & \textbf{1.17} & 4.28 & 0.59 & \textbf{0.07} \\
18 & 14 & \textbf{31.00} & 41.60 & \textbf{0.00} & 55.04 & 0.22 & \textbf{0.05} \\
20 & 5 & \textbf{390.30} & 404.40 & \textbf{0.00} & 3.71 & 0.62 & \textbf{0.06} \\
20 & 10 & \textbf{165.40} & 172.00 & \textbf{0.55} & 4.06 & 0.66 & \textbf{0.05} \\
20 & 15 & \textbf{33.40} & 36.50 & \textbf{0.00} & 7.31 & 0.32 & \textbf{0.06} \\
22 & 6 & \textbf{385.50} & 390.60 & \textbf{0.59} & 1.88 & 0.19 & \textbf{0.07} \\
22 & 11 & \textbf{163.20} & 170.80 & \textbf{0.00} & 4.31 & 0.28 & \textbf{0.07} \\
22 & 17 & \textbf{35.20} & 39.10 & \textbf{7.91} & 31.93 & 0.24 & \textbf{0.06} \\
25 & 7 & \textbf{438.20} & 446.30 & \textbf{0.11} & 1.67 & 2.66 & \textbf{0.08} \\
25 & 13 & \textbf{194.90} & 197.20 & \textbf{0.89} & 2.58 & 2.75 & \textbf{0.07} \\
25 & 19 & \textbf{43.30} & 49.10 & \textbf{0.00} & 13.53 & 0.48 & \textbf{0.07} \\
28 & 7 & \textbf{518.30} & 537.70 & \textbf{0.15} & 3.63 & 0.67 & \textbf{0.07} \\
28 & 14 & \textbf{224.90} & 225.90 & \textbf{0.21} & 0.61 & 2.10 & \textbf{0.08} \\
28 & 21 & \textbf{46.70} & 52.10 & \textbf{0.00} & 11.85 & 0.45 & \textbf{0.08} \\
30 & 8 & \textbf{536.30} & 556.50 & \textbf{0.00} & 3.61 & 1.54 & \textbf{0.08} \\
30 & 15 & \textbf{231.20} & 233.70 & \textbf{0.21} & 1.20 & 3.64 & \textbf{0.09} \\
30 & 23 & \textbf{49.00} & 51.30 & \textbf{0.00} & 4.97 & 1.08 & \textbf{0.08} \\
100 & 25 & \textbf{2,164.71} & 2,473.08 & \textbf{0.00} & 14.22 & 10.02 & \textbf{0.54} \\
100 & 50 & \textbf{965.37} & 1,062.92 & \textbf{0.04} & 10.23 & 51.68 & \textbf{0.52} \\
100 & 75 & \textbf{245.01} & 313.44 & \textbf{0.00} & 27.62 & 24.69 & \textbf{0.53} \\
\bottomrule
\end{tabular}}
\caption{KIP results comparing \mls{} with and without greedy-based features \mld{}. Each row averaged over 10 instances, except for $n=100$, which is an average over 100 instances.  All times in seconds.  }
\label{tab:kp_greedy_appendix}
\end{table*}

\section{Computing Setup}
\label{app:setup}
The experiments for three out of four benchmarks were run on a computing cluster with an Intel Xeon CPU E5-2683 and Nvidia Tesla P100 GPU with 64GB of RAM (for training). The experiments for one benchmark were run on a virtual machine with two Intel Xeon(R) CPUs at 2.20GHz and 12GB of RAM. Pytorch 2.0.1 \citep{NEURIPS2019_9015} was used for all neural network models and scikit-learn 1.4.0 was used for gradient-boosted trees in the DNDP~\cite{scikit-learn}.  Gurobi 11.0.1 \citep{gurobi} was used as the MILP solver and gurobi-machinelearning 1.4.0 was used to embed the learning models into MILPs.

\section{Machine Learning Details}
\label{app:ml}

\subsection{Models, features, \& hyperparameters}
\label{app:ml_details}

For all problems, we derive features that correspond to each upper-level decision variable, as well as general instance features. 

\subsubsection{KIP, CNP, DRP}
For KIP, CNP, DRP, we have $n$ decisions in both the upper- and lower-level of the problems. For the learning model, we utilize a set-based architecture \cite{zaheer2017deep}, wherein we first represent the objective and constraint coefficients for each upper-level and lower-level decision, independent of the decision ($\vect f_i$).  Each of these are passed through a feed-forward network with shared parameters ($\Psi_d$) to compute an $m$-dimension embedding.  The embeddings are then summed and passed through another feed-forward network ($\Psi_s)$ to compute the instance's $k$-dimensional embedding.  This instance embedding is then concatenated with features related to the upper- and lower-level that are dependent on the decision ($h(\vect x_i)$). The concatenated vector
is passed through a feed-forward network with shared parameters ($\Psi_v$) to predict $n$ scalar values (i.e., one for each decision). The final prediction is equal to the dot product of the $n$ predictions with the objective function coefficients of the upper- or lower-level problem, depending on the type of value function approximation. This final step exploits the separable nature of the objective functions in question as they can all be expressed as $\sum_{i=1}^{n}c_i z_i$, where $c_i$ is a~\textit{known} coefficient and $z_i$ is a decision variable or a function of a set of decision variables with index $i$. The objectives for KIP, CNP, and DRP all satisfy this property. We leverage this knowledge of the coefficients of separable objective functions as an inductive bias in the design of the learning architecture to facilitate convergence to accurate models.  The decision-dependent and decision-independent features are summarized in Table~\ref{tab:set_feats}.  

One minor remark for KIP is that since it is an interdiction problem, we multiply the concatenated vector, i.e., the input to $\Psi_v$, by $(1-x_i)$ as a mask given that the follower cannot select the same items as the leader.  

For all instances, we do not perform systematic hyperparameter tuning. The sub-networks $\Psi_d$, $\Psi_s$, $\Psi_v$ are feed-forward networks with one hidden layer of dimension 128.  The decision-independent feature embedding dimension ($m$) is 64, and the instance embedding dimension ($k$) is 32.  We use a batch size of 32, a learning rate of 0.01, and Adam \cite{kingma2014adam} as an optimizer.

\begin{table}[htbp!]\centering    \resizebox{       \ifdim\width>\columnwidth         \columnwidth       \else         \width       \fi     }{!}{%
    \begin{tabular}{c|c|c}
    \toprule
         Problem & Type & Features          \\
    \midrule
         KIP    & $\vect f_i$  & $\frac{p_i/a_i}{\max_i\{p_i/a_i\}}$, $p_i$, $a_i$,  $k / n$, $x_i^{dg}$, $y_i^{dg}$, $obj^{dg} / n$  \\
                & $h(\vect x_i)$  & $\vect f_i$, $x_i$, $y_i^g$ \\
    \midrule
        CNP     & $\vect f_i$ & $\frac{p_i^d / d_i}{\max_i\{p_i^d/d_i\}}$, $\frac{p_i^a / a_i}{\max_i\{p_i^a/a_i\}}$, $d_i$, $a_i$, $p_i^a$, $p_i^d$, $\gamma$, $\eta$,  $\epsilon$, $\delta$, $A$, $D$ \\
                & $h(\vect x_i)$ & $\vect f_i$, $x_i$, $-\gamma (1-x_i)$, $(1-x_i)$, $(1-\eta)x_i$ \\
    \midrule
        DRP     & $\vect f_i$ & $\frac{w_i / c_i}{\max_i\{w_i/c_i\}}$, $\frac{v_i / c_i}{\max_i\{c_i/v_i\}}$, $w_i$, $v_i$, $c_i$, $B_d$, $B_r$ \\
                & $h(\vect x_i)$ & $\vect f_i$, $x_i$  \\
    \bottomrule
    \end{tabular}}
    \vspace{1mm}
    \caption{Features for KIP, CNP, and DRP.  Most features are derived directly from the objective and constraint coefficients, so refer to Appendix~\ref{sec:formulations} for the definitions.  For KIP, additional features are computed using simple greedy heuristics.  For the KIP DIF, we compute $x_i^{dg}$, $y_i^{dg}$, $obj^{dg}$, which correspond to a purely greedy strategy, i.e., the upper-level interdicts the $k$ items with the largest profit to cost ratio ($p_i/a_i$) and the lower-level decisions are the largest remaining highest profit to cost ratio items. For $h(\vect x_i)$ in KIP, we also include lower-level decisions based on \greedy{} ($y_i^g$).   }
    \label{tab:set_feats}
\end{table}

\subsubsection{DNDP}
We train neural network models (one hidden layer, 16 neurons, a learning rate of 0.01 with the Adam optimizer) and gradient-boosted trees (default scikit-learn hyperparameters, except for $\texttt{n\_estimators} =50$). The inputs to these models are 30-dimensional binary vectors representing the subset of links selected by the leader.

\subsection{Data collection \& training times}
\label{app:ml_times}

For KIP, CNP, DRP, we sample 1,000 instances according to the procedures specified in \citet{tang2016class}, \citet{dragotto2023critical}, and \citet{ghatkar2023solution}, respectively.  For each instance, we sample 100 upper-level decisions, i.e., 100,000 samples in total.  Additionally, for KIP, CNP, DRP, the lower-level problems are solved with 30 CPUs in parallel.  For training, we train for 1,000 epochs.  However, if the validation mean absolute error does not improve in 200 iterations, we terminate early.  
Data collection and training times are reported in Table~\ref{tab:data_tr_times}.

For DNDP, we use the Sioux Falls transportation network provided by~\cite{rey2020computational} along with the author's 60 test instances. All instances use the same base network with different sets of candidate links to add and different budgets. There are 30 candidate links in total, and each test instance involves a subset of 10 or 20 of these links. To construct a training set, we sample 1000 leader decisions by first uniformly sampling an integer between 1 and 20, then uniformly sampling that many candidate links out of the set of 30 options; samples with total cost exceeding 50\% of the total cost of all 30 edges are rejected as they are likely to exceed realistic budgets.

\begin{table}[htbp!]\centering\resizebox{0.4\textwidth}{!}{
    \begin{tabular}{l|r|rr}
    \toprule
        Problem            &  Data Collection & \multicolumn{2}{c}{Training Time} \\
                            &                  & Lower            & Upper         \\
        \midrule
        KIP ($n=18$)            & 142.08    &  2576.43 &  - \\
        KIP ($n=20$)            & 172.65    &  4714.88 &  - \\
        KIP ($n=22$)            & 141.61    &  2346.20 &  - \\
        KIP ($n=25$)            & 170.30    &  4007.75 &  - \\
        KIP ($n=28$)            & 142.34    &  2684.80 &  - \\
        KIP ($n=30$)            & 168.91    &  1835.27 &  - \\
        KIP ($n=100$)           & 164.16    &  3467.26 &  -  \\
        \midrule
        CNP ($|V|=10$)          & 1,397.58  &  1839.60  & 4670.87 \\
        CNP ($|V|=25$)          & 1,522.32  &  2072.60 & 4841.31 \\
        CNP ($|V|=50$)          & 1,823.16  &  2103.50 & 2963.64 \\
        CNP ($|V|=100$)         & 1,872.07  &  1944.08 & 2931.43 \\
        CNP ($|V|=300$)         & 3,662.89  &  3800.02 & 3598.04 \\
        CNP ($|V|=500$)         & 4,742.06  &  2263.68 & 6214.35 \\
        \midrule
        DRP                     & 1939.24  & 1768.82  & 1784.15 \\   
        \midrule
        DNDP                    & $\sim$ 300.0  & $\sim$ 5.0  & $\sim$ 5.0 \\
        \bottomrule
    \end{tabular}}
    \vspace{1mm}
    \caption{Data collection and training times for all problems.  Note that as KIP is an interdiction problem, the same trained model can be used for the upper- and lower-level approximation, so we simply leave the upper-level as - for this problem.
    All times in seconds.}
    \label{tab:data_tr_times}
\end{table}

\subsection{Prediction error}
\label{app:pred_error}

For KIP, CNP, and DRP, we provide the Mean Absolute Error (MAE), as well as the Mean Absolute Label (MAL) as a reference to access the prediction quality for the validation data in Table~\ref{tab:val_res}.  The table shows that models achieve a MAE of at most $\sim 1e^{-6}$ with a MAL ranging from 0.006 to 200 depending on the problem. For the DNDP, both neural network and gradient-boosted tree models achieve Mean Absolute Percentage Error (MAPE) values of less than 5\%. 

\begin{table}[htbp!]\centering\resizebox{0.6\textwidth}{!}{
    \begin{tabular}{l|rr|rr}
    \toprule
        Problem & \multicolumn{2}{c|}{Upper-Level Approximation} & \multicolumn{2}{c}{Lower-Level Approximation}\\
                &   MAE             &  MAL            & MAE            &  MAL         \\
    \midrule
        KIP	($n=18$)	& $5.05e^{-09}$	& 2.0992	& -	&- \\
        KIP	($n=20$)	& $5.98e^{-09}$	& 2.5369	& -	&- \\
        KIP	($n=22$)	& $4.00e^{-06}$	& 2.6933	& -	&- \\
        KIP	($n=25$)	& $3.46e^{-10}$	& 3.0036	& -	&- \\
        KIP	($n=28$)	& $1.48e^{-08}$	& 3.7327	& -	&- \\
        KIP	($n=30$)	& $2.85e^{-08}$	& 3.7445	& -	&- \\
        KIP	($n=100$)	& $1.21e^{-08}$	& 13.104	& -	&- \\
    \midrule
        CNP	$(|V| = 10)$	& $1.35e^{-08}$	& 5.4606	& $6.74e^{-06}$	& 1.5272 \\
        CNP	$(|V| = 25)$	& $1.22e^{-06}$	  & 12.3452	  & $1.09e^{-08}$ & 3.7224 \\
        CNP	$(|V| = 50)$	& $1.23e^{-07}$	& 23.9687	& $4.77e^{-09}$	& 7.7536 \\
        CNP	$(|V| = 100)$	& $4.33e^{-08}$	& 46.5468	& $3.97e^{-06}$	& 14.7491 \\
        CNP	$(|V| = 300)$	& $1.83e^{-08}$	& 135.8972	& $2.54e^{-07}$	& 44.577 \\
        CNP	$(|V| = 500)$	& $8.54e^{-08}$	& 222.3805	& $9.51e^{-08}$	& 76.212 \\
    \midrule
        DRP	&  $2.51e^{-07}$	& 0.0062  & $7.82e^{-08}$ &	0.0703 \\
    \bottomrule
    \end{tabular}}
    \vspace{1mm}
    \caption{Prediction errors for all problems.  Note that as KIP is an interdiction problem, the same trained model can be used for the upper- and lower-level approximation, so we simply leave the lower-level as - for this problem.}
    \label{tab:val_res}
\end{table}

\newpage
\section*{NeurIPS Paper Checklist}

\begin{enumerate}

\item {\bf Claims}
    \item[] Question: Do the main claims made in the abstract and introduction accurately reflect the paper's contributions and scope?
    \item[] Answer: \answerYes{} 
    \item[] Justification: 
        The methodology and numerical results accurately validate the key methodological contributions and computational performance discussed in the abstract and introduction.
    \item[] Guidelines:
    \begin{itemize}
        \item The answer NA means that the abstract and introduction do not include the claims made in the paper.
        \item The abstract and/or introduction should clearly state the claims made, including the contributions made in the paper and important assumptions and limitations. A No or NA answer to this question will not be perceived well by the reviewers. 
        \item The claims made should match theoretical and experimental results, and reflect how much the results can be expected to generalize to other settings. 
        \item It is fine to include aspirational goals as motivation as long as it is clear that these goals are not attained by the paper. 
    \end{itemize}

\item {\bf Limitations}
    \item[] Question: Does the paper discuss the limitations of the work performed by the authors?
    \item[] Answer: \answerYes{} 
    \item[] Justification: 
        Key limitations of \method{} are briefly discussed in Section~\ref{sec:methods}.  Any assumptions are explicitly stated throughout the paper and appendices.  
    \item[] Guidelines:
    \begin{itemize}
        \item The answer NA means that the paper has no limitation while the answer No means that the paper has limitations, but those are not discussed in the paper. 
        \item The authors are encouraged to create a separate "Limitations" section in their paper.
        \item The paper should point out any strong assumptions and how robust the results are to violations of these assumptions (e.g., independence assumptions, noiseless settings, model well-specification, asymptotic approximations only holding locally). The authors should reflect on how these assumptions might be violated in practice and what the implications would be.
        \item The authors should reflect on the scope of the claims made, e.g., if the approach was only tested on a few datasets or with a few runs. In general, empirical results often depend on implicit assumptions, which should be articulated.
        \item The authors should reflect on the factors that influence the performance of the approach. For example, a facial recognition algorithm may perform poorly when image resolution is low or images are taken in low lighting. Or a speech-to-text system might not be used reliably to provide closed captions for online lectures because it fails to handle technical jargon.
        \item The authors should discuss the computational efficiency of the proposed algorithms and how they scale with dataset size.
        \item If applicable, the authors should discuss possible limitations of their approach to address problems of privacy and fairness.
        \item While the authors might fear that complete honesty about limitations might be used by reviewers as grounds for rejection, a worse outcome might be that reviewers discover limitations that aren't acknowledged in the paper. The authors should use their best judgment and recognize that individual actions in favor of transparency play an important role in developing norms that preserve the integrity of the community. Reviewers will be specifically instructed to not penalize honesty concerning limitations.
    \end{itemize}

\item {\bf Theory Assumptions and Proofs}
    \item[] Question: For each theoretical result, does the paper provide the full set of assumptions and a complete (and correct) proof?
    \item[] Answer: \answerYes{} 
    \item[] Justification: 
        Full theorems, assumptions, and complete proofs are provided in the appendix.  Important assumptions are mentioned in the main paper.  
    \item[] Guidelines:
    \begin{itemize}
        \item The answer NA means that the paper does not include theoretical results. 
        \item All the theorems, formulas, and proofs in the paper should be numbered and cross-referenced.
        \item All assumptions should be clearly stated or referenced in the statement of any theorems.
        \item The proofs can either appear in the main paper or the supplemental material, but if they appear in the supplemental material, the authors are encouraged to provide a short proof sketch to provide intuition. 
        \item Inversely, any informal proof provided in the core of the paper should be complemented by formal proofs provided in appendix or supplemental material.
        \item Theorems and Lemmas that the proof relies upon should be properly referenced. 
    \end{itemize}

    \item {\bf Experimental Result Reproducibility}
    \item[] Question: Does the paper fully disclose all the information needed to reproduce the main experimental results of the paper to the extent that it affects the main claims and/or conclusions of the paper (regardless of whether the code and data are provided or not)?
    \item[] Answer: \answerYes{} 
    \item[] Justification: 
        Yes, the paper provides sufficient detail for the methodology and the learning models in the appendix. The code provided also include can be run to reproduce the results in the paper, the trained models and test instances are also included.  
    \item[] Guidelines:
    \begin{itemize}
        \item The answer NA means that the paper does not include experiments.
        \item If the paper includes experiments, a No answer to this question will not be perceived well by the reviewers: Making the paper reproducible is important, regardless of whether the code and data are provided or not.
        \item If the contribution is a dataset and/or model, the authors should describe the steps taken to make their results reproducible or verifiable. 
        \item Depending on the contribution, reproducibility can be accomplished in various ways. For example, if the contribution is a novel architecture, describing the architecture fully might suffice, or if the contribution is a specific model and empirical evaluation, it may be necessary to either make it possible for others to replicate the model with the same dataset, or provide access to the model. In general. releasing code and data is often one good way to accomplish this, but reproducibility can also be provided via detailed instructions for how to replicate the results, access to a hosted model (e.g., in the case of a large language model), releasing of a model checkpoint, or other means that are appropriate to the research performed.
        \item While NeurIPS does not require releasing code, the conference does require all submissions to provide some reasonable avenue for reproducibility, which may depend on the nature of the contribution. For example
        \begin{enumerate}
            \item If the contribution is primarily a new algorithm, the paper should make it clear how to reproduce that algorithm.
            \item If the contribution is primarily a new model architecture, the paper should describe the architecture clearly and fully.
            \item If the contribution is a new model (e.g., a large language model), then there should either be a way to access this model for reproducing the results or a way to reproduce the model (e.g., with an open-source dataset or instructions for how to construct the dataset).
            \item We recognize that reproducibility may be tricky in some cases, in which case authors are welcome to describe the particular way they provide for reproducibility. In the case of closed-source models, it may be that access to the model is limited in some way (e.g., to registered users), but it should be possible for other researchers to have some path to reproducing or verifying the results.
        \end{enumerate}
    \end{itemize}

\item {\bf Open access to data and code}
    \item[] Question: Does the paper provide open access to the data and code, with sufficient instructions to faithfully reproduce the main experimental results, as described in supplemental material?
    \item[] Answer: \answerYes{} 
    \item[] Justification: 
        Yes, all of the code and data are available here at \url{https://github.com/khalil-research/Neur2BiLO}.
    \item[] Guidelines:
    \begin{itemize}
        \item The answer NA means that paper does not include experiments requiring code.
        \item Please see the NeurIPS code and data submission guidelines (\url{https://nips.cc/public/guides/CodeSubmissionPolicy}) for more details.
        \item While we encourage the release of code and data, we understand that this might not be possible, so “No” is an acceptable answer. Papers cannot be rejected simply for not including code, unless this is central to the contribution (e.g., for a new open-source benchmark).
        \item The instructions should contain the exact command and environment needed to run to reproduce the results. See the NeurIPS code and data submission guidelines (\url{https://nips.cc/public/guides/CodeSubmissionPolicy}) for more details.
        \item The authors should provide instructions on data access and preparation, including how to access the raw data, preprocessed data, intermediate data, and generated data, etc.
        \item The authors should provide scripts to reproduce all experimental results for the new proposed method and baselines. If only a subset of experiments are reproducible, they should state which ones are omitted from the script and why.
        \item At submission time, to preserve anonymity, the authors should release anonymized versions (if applicable).
        \item Providing as much information as possible in supplemental material (appended to the paper) is recommended, but including URLs to data and code is permitted.
    \end{itemize}

\item {\bf Experimental Setting/Details}
    \item[] Question: Does the paper specify all the training and test details (e.g., data splits, hyperparameters, how they were chosen, type of optimizer, etc.) necessary to understand the results?
    \item[] Answer: \answerYes{} 
    \item[] Justification:
        Yes, these details are included in detail in Appendix~\ref{app:ml}.  
    \item[] Guidelines:
    \begin{itemize}
        \item The answer NA means that the paper does not include experiments.
        \item The experimental setting should be presented in the core of the paper to a level of detail that is necessary to appreciate the results and make sense of them.
        \item The full details can be provided either with the code, in appendix, or as supplemental material.
    \end{itemize}

\item {\bf Experiment Statistical Significance}
    \item[] Question: Does the paper report error bars suitably and correctly defined or other appropriate information about the statistical significance of the experiments?
    \item[] Answer: \answerYes{} 
    \item[] Justification: 
        Yes.  While the main paper reports averages, full distributional information for the main computational results is included in Appendix~\ref{app:boxplots}.  
    \item[] Guidelines:
    \begin{itemize}
        \item The answer NA means that the paper does not include experiments.
        \item The authors should answer "Yes" if the results are accompanied by error bars, confidence intervals, or statistical significance tests, at least for the experiments that support the main claims of the paper.
        \item The factors of variability that the error bars are capturing should be clearly stated (for example, train/test split, initialization, random drawing of some parameter, or overall run with given experimental conditions).
        \item The method for calculating the error bars should be explained (closed form formula, call to a library function, bootstrap, etc.)
        \item The assumptions made should be given (e.g., Normally distributed errors).
        \item It should be clear whether the error bar is the standard deviation or the standard error of the mean.
        \item It is OK to report 1-sigma error bars, but one should state it. The authors should preferably report a 2-sigma error bar than state that they have a 96\% CI, if the hypothesis of Normality of errors is not verified.
        \item For asymmetric distributions, the authors should be careful not to show in tables or figures symmetric error bars that would yield results that are out of range (e.g. negative error rates).
        \item If error bars are reported in tables or plots, The authors should explain in the text how they were calculated and reference the corresponding figures or tables in the text.
    \end{itemize}

\item {\bf Experiments Compute Resources}
    \item[] Question: For each experiment, does the paper provide sufficient information on the computer resources (type of compute workers, memory, time of execution) needed to reproduce the experiments?
    \item[] Answer: \answerYes{} 
    \item[] Justification: 
        All computing information is discussed in Appendix~\ref{app:setup}.
    \item[] Guidelines:
    \begin{itemize}
        \item The answer NA means that the paper does not include experiments.
        \item The paper should indicate the type of compute workers CPU or GPU, internal cluster, or cloud provider, including relevant memory and storage.
        \item The paper should provide the amount of compute required for each of the individual experimental runs as well as estimate the total compute. 
        \item The paper should disclose whether the full research project required more compute than the experiments reported in the paper (e.g., preliminary or failed experiments that didn't make it into the paper). 
    \end{itemize}
    
\item {\bf Code Of Ethics}
    \item[] Question: Does the research conducted in the paper conform, in every respect, with the NeurIPS Code of Ethics \url{https://neurips.cc/public/EthicsGuidelines}?
    \item[] Answer: \answerYes{} 
    \item[] Justification: 
        The research does not involve human subjects.  Given the nature of the paper, there are no data-related concerns.  
    \item[] Guidelines:
    \begin{itemize}
        \item The answer NA means that the authors have not reviewed the NeurIPS Code of Ethics.
        \item If the authors answer No, they should explain the special circumstances that require a deviation from the Code of Ethics.
        \item The authors should make sure to preserve anonymity (e.g., if there is a special consideration due to laws or regulations in their jurisdiction).
    \end{itemize}

\item {\bf Broader Impacts}
    \item[] Question: Does the paper discuss both potential positive societal impacts and negative societal impacts of the work performed?
    \item[] Answer: \answerYes{} 
    \item[] Justification: 
    Yes, an impact statement is provided in the appendix.  For the camera-ready version of the paper, we will include this in the main paper, given the additional space provided.  
    \item[] Guidelines:
    \begin{itemize}
        \item The answer NA means that there is no societal impact of the work performed.
        \item If the authors answer NA or No, they should explain why their work has no societal impact or why the paper does not address societal impact.
        \item Examples of negative societal impacts include potential malicious or unintended uses (e.g., disinformation, generating fake profiles, surveillance), fairness considerations (e.g., deployment of technologies that could make decisions that unfairly impact specific groups), privacy considerations, and security considerations.
        \item The conference expects that many papers will be foundational research and not tied to particular applications, let alone deployments. However, if there is a direct path to any negative applications, the authors should point it out. For example, it is legitimate to point out that an improvement in the quality of generative models could be used to generate deepfakes for disinformation. On the other hand, it is not needed to point out that a generic algorithm for optimizing neural networks could enable people to train models that generate Deepfakes faster.
        \item The authors should consider possible harms that could arise when the technology is being used as intended and functioning correctly, harms that could arise when the technology is being used as intended but gives incorrect results, and harms following from (intentional or unintentional) misuse of the technology.
        \item If there are negative societal impacts, the authors could also discuss possible mitigation strategies (e.g., gated release of models, providing defenses in addition to attacks, mechanisms for monitoring misuse, mechanisms to monitor how a system learns from feedback over time, improving the efficiency and accessibility of ML).
    \end{itemize}
    
\item {\bf Safeguards}
    \item[] Question: Does the paper describe safeguards that have been put in place for responsible release of data or models that have a high risk for misuse (e.g., pretrained language models, image generators, or scraped datasets)?
    \item[] Answer: \answerNA{} 
    \item[] Justification: 
        The released models do not have a high risk of misuse.  
    \item[] Guidelines:
    \begin{itemize}
        \item The answer NA means that the paper poses no such risks.
        \item Released models that have a high risk for misuse or dual-use should be released with necessary safeguards to allow for controlled use of the model, for example by requiring that users adhere to usage guidelines or restrictions to access the model or implementing safety filters. 
        \item Datasets that have been scraped from the Internet could pose safety risks. The authors should describe how they avoided releasing unsafe images.
        \item We recognize that providing effective safeguards is challenging, and many papers do not require this, but we encourage authors to take this into account and make a best faith effort.
    \end{itemize}

\item {\bf Licenses for existing assets}
    \item[] Question: Are the creators or original owners of assets (e.g., code, data, models), used in the paper, properly credited and are the license and terms of use explicitly mentioned and properly respected?
    \item[] Answer: \answerYes{} 
    \item[] Justification: 
        The creators either own or properly credit all code/data/models.
    \item[] Guidelines:
    \begin{itemize}
        \item The answer NA means that the paper does not use existing assets.
        \item The authors should cite the original paper that produced the code package or dataset.
        \item The authors should state which version of the asset is used and, if possible, include a URL.
        \item The name of the license (e.g., CC-BY 4.0) should be included for each asset.
        \item For scraped data from a particular source (e.g., website), the copyright and terms of service of that source should be provided.
        \item If assets are released, the license, copyright information, and terms of use in the package should be provided. For popular datasets, \url{paperswithcode.com/datasets} has curated licenses for some datasets. Their licensing guide can help determine the license of a dataset.
        \item For existing datasets that are re-packaged, both the original license and the license of the derived asset (if it has changed) should be provided.
        \item If this information is not available online, the authors are encouraged to reach out to the asset's creators.
    \end{itemize}

\item {\bf New Assets}
    \item[] Question: Are new assets introduced in the paper well documented and is the documentation provided alongside the assets?
    \item[] Answer: \answerYes{} 
    \item[] Justification: 
        All assets (dataset/code/model) are available and anonymized.
    \item[] Guidelines:
    \begin{itemize}
        \item The answer NA means that the paper does not release new assets.
        \item Researchers should communicate the details of the dataset/code/model as part of their submissions via structured templates. This includes details about training, license, limitations, etc. 
        \item The paper should discuss whether and how consent was obtained from people whose asset is used.
        \item At submission time, remember to anonymize your assets (if applicable). You can either create an anonymized URL or include an anonymized zip file.
    \end{itemize}

\item {\bf Crowdsourcing and Research with Human Subjects}
    \item[] Question: For crowdsourcing experiments and research with human subjects, does the paper include the full text of instructions given to participants and screenshots, if applicable, as well as details about compensation (if any)? 
    \item[] Answer: \answerNA{} 
    \item[] Justification: 
        The paper does not involve crowdsourcing nor research with human subjects.
    \item[] Guidelines:
    \begin{itemize}
        \item The answer NA means that the paper does not involve crowdsourcing nor research with human subjects.
        \item Including this information in the supplemental material is fine, but if the main contribution of the paper involves human subjects, then as much detail as possible should be included in the main paper. 
        \item According to the NeurIPS Code of Ethics, workers involved in data collection, curation, or other labor should be paid at least the minimum wage in the country of the data collector. 
    \end{itemize}

\item {\bf Institutional Review Board (IRB) Approvals or Equivalent for Research with Human Subjects}
    \item[] Question: Does the paper describe potential risks incurred by study participants, whether such risks were disclosed to the subjects, and whether Institutional Review Board (IRB) approvals (or an equivalent approval/review based on the requirements of your country or institution) were obtained?
    \item[] Answer: \answerNA{} 
    \item[] Justification:  
        The paper does not involve crowdsourcing nor research with human subjects.
    \item[] Guidelines:
    \begin{itemize}
        \item The answer NA means that the paper does not involve crowdsourcing nor research with human subjects.
        \item Depending on the country in which research is conducted, IRB approval (or equivalent) may be required for any human subjects research. If you obtained IRB approval, you should clearly state this in the paper. 
        \item We recognize that the procedures for this may vary significantly between institutions and locations, and we expect authors to adhere to the NeurIPS Code of Ethics and the guidelines for their institution. 
        \item For initial submissions, do not include any information that would break anonymity (if applicable), such as the institution conducting the review.
    \end{itemize}

\end{enumerate}

\end{document}